\newcommand\numberthis{\addtocounter{equation}{1}\tag{\theequation}}
\newtheorem{theorem}{Theorem}
\newtheorem{lemma}{Lemma}
\newtheorem{definition}{Definition}
\newtheorem{remark}{Remark}
\newtheorem{assumption}{Assumption}
\newtheorem{proposition}{Proposition}
\newtheorem{corollary}{Corollary}
\title{\LARGE \bf
Indirect Adaptive Optimal Control in the Presence of Input Saturation*
}
\author{Sunbochen Tang and Anuradha M. Annaswamy
\thanks{*This work was supported by Ford-MIT Alliance.}
\thanks{Sunbochen Tang (tangsun@mit.edu) and Anuradha M. Annaswamy (aanna@mit.edu) are with the Department of Mechanical Engineering, Massachusetts Institute of Technology, Cambridge, MA 02139, USA.
}}
\begin{document}

\maketitle

\begin{abstract}
    In this paper, we propose a combined Magnitude Saturated Adaptive Control (MSAC)-Model Predictive Control (MPC) approach to linear quadratic tracking optimal control problems with parametric uncertainties and input saturation. The proposed MSAC-MPC approach first focuses on a stable solution and parameter estimation, and switches to MPC when parameter learning is accomplished. We show that the MSAC, based on a high-order tuner, leads to parameter convergence to true values while providing stability guarantees. We also show that after switching to MPC, the optimality gap is well-defined and proportional to the parameter estimation error. We demonstrate the effectiveness of the proposed MSAC-MPC algorithm through a numerical example based on a linear second-order, two input, unstable system.
\end{abstract}

\section{Introduction}
Adaptive control is developed to address the presence of parametric uncertainties. The term ``adaptive'' suggests that an adaptive controller can modify its behavior in response to sudden changes in the dynamics of the process \cite{aastrom2013adaptive}, with emphasis on a real-time solution that achieves typical control objectives including tracking a reference signal and stabilization. Therefore, the performance is evaluated with present and instantaneous properties, e.g., stability guarantees. In general, future behaviors such as optimality of the system trajectory are not considered in the goals of adaptive control systems.

Optimal control methods, in contrast, focus on finding control strategies to minimize a specific cost function that the system is expected to incur over a future time horizon. Other control goals such as stabilization and reference tracking are often incorporated through specific cost function designs or constrained optimization formulations. To achieve cost minimization, optimal control strategies rely on having an accurate model, which requires knowledge of both the model structure and parameters within.

When parametric uncertainties occur, an accurate model is no longer available, and hence the problem becomes more complex, as one has to determine a controller that both adapt to uncertainties and achieve optimality goals. We propose a solution to this complex problem which combines Magnitude Saturated Adaptive Control (MSAC) and Model Predictive Control (MPC) approaches. The proposed MSAC-MPC controller first focuses on fast adaptation by leveraging high-order tuner-based adaptation laws, which guarantees stability of the closed-loop system, and ensures parameter learning in the presence of persistent excitation \cite{gaudio2020class}. Once parameter learning is accomplished, our proposed controller shifts its focus to the optimality objective, which utilizes a MPC structure to minimize cost using the estimated model learnt during adaptation. In addition, the MSAC component in our proposed approach incorporates control input magnitude saturation, which is equivalent to input constraints commonly seen in optimal control problems. All discussions are limited to linear time-invariant systems.

In the literature, approaches to solve adaptive optimal control problems can be broadly categorized into direct and indirect adaptive optimal control methods (see \cite{narendra2012stable} for definitions). As introduced in \cite{sutton1992reinforcement}, indirect adaptive optimal control methods distinguish from their direct counterpart by explicitly learning the unknown parameters. Indirect adaptive optimal control approaches often utilize a robust optimal control framework. As an example, the Adaptive MPC proposed in \cite{adetola2009adaptive} iteratively updates estimates of the uncertain parameters and their error bounds, and optimizes with respect to the worst case using a robust MPC design \cite{bemporad2003min}. Although this approach provides proof of stability guarantees and achieves optimality goals, it accomplishes these goals by solving a min-max optimization problem to ensure robustness over all possible uncertain parameter values in a compact set, which is computationally very expensive, thus less suitable for real-time decision making tasks. In comparison, our proposed approach focuses on real-time control and parameter learning during adaptation, therefore, reduces the estimation error of uncertain parameters rapidly, and avoids the need for invoking robust MPC.

Direct adaptive optimal control methods, which refer to methods that do not learn uncertain parameters explicitly, are mainly developed based on online approximate dynamic programming (ADP) algorithms such as value iteration and policy iteration \cite{bertsekas2008approximate}. As an example, the computational adaptive optimal control method proposed in \cite{jiang2012computational} for linear systems with unknown matrices uses policy iteration to iteratively solve the Riccati equation and consequently find optimal control policy, instead of identifying system dynamics. Similar approaches are developed in \cite{jiang2015global}, \cite{vrabie2008adaptive}, and \cite{bhasin2013novel}. Although these online algorithms are shown to be very effective and provide stability guarantees, most of them require an initial stable control policy, which is often unavailable especially in the case where parametric uncertainties occur online. Our proposed MSAC approach avoids this difficulty and ensures that the proposed real-time controller will guarantee boundedness and a small tracking error, without any requirement of an initial controller that guarantees closed-loop stability.

The main contribution of this paper is that the proposed MSAC-MPC approach provides real-time stable solution and parameter estimation during MSAC adaptation, and achieves near-optimal solution using MPC after parameter learning. Further, our approach is applicable to both finite-horizon and infinite-horizon optimal control problems. 

The remainder of this paper is organized as follows. In Section II, we formulate the adaptive optimal control problem. In Section III, we introduce the linear quadratic tracking problem and derive a structure of optimal solutions under input constraints. In Section IV, we propose the MSAC-MPC controller and present theoretical results including stability guarantees, parameter learning, and optimality analysis. In Section V, we demonstrate the proposed approach in a numerical example. The paper is concluded in Section VI.

\section{Problem Formulation}
Consider a linear system with parametric uncertainties,
\begin{equation}\label{equ:plant}
    \dot{x}_p = A_p x_p + B_p \Lambda (B_{sat}(u) )
\end{equation}
where $x_p \in \mathbb{R}^{n_x} $ is the state vector that is assumed to be measurable, $u \in \mathbb{R}^{n_u}$ is the control input vector. The unknown parameters are $A_p \in \mathbb{R}^{n_x \times n_x}$, $\Lambda \in \mathbb{R}^{n_u \times n_u}$, and we assume $\Lambda$ is a diagonal matrix with all entries positive. The function $B_{sat}(u)$ represents control input saturation by a ball centered at origin with a fixed radius $u_{max} \in \mathbb{R}$, i.e.,
\begin{equation}\label{equ:input_sat}
    B_{sat}(u) = 
    \begin{cases}
        u, &\|u\| \leq u_{max}\\
        \frac{\|u\|}{u_{max}}u, &\|u\| > u_{max}
    \end{cases}
\end{equation}

The objective is to compute an optimal control law for the system in \eqref{equ:plant} such that the following quadratic cost function is minimized over a prediction horizon of length $T = t_1 - t_0$,
\begin{equation}\label{equ:cost_function}
    J = h(x_p(t_1)) + \int_{t_0}^{t_1} l(x_p, u, \tau) d\tau
\end{equation}
where $h(x) = (x-x_d(t_1))^T Q_f (x- x_d(t_1)), Q_f = Q_f^T \succeq 0$ is the terminal cost matrix, and $l(x, u, \tau) = (x(\tau) - x_d(\tau))^TQ(x(\tau) - x_d(\tau)) + u(\tau)^T R u(\tau) , Q = Q^T  \succeq 0, R = R^T \succ 0$ is the stage cost matrices. Note that the stage cost is penalizing a tracking error, $e_p = x_p - x_d$, which forces the system to track an exogenous signal $x_d(\tau)$. 

In the optimal control framework, the input magnitude saturation in \eqref{equ:input_sat} is equivalent to input constraints. Therefore, the overall problem we are addressing in this paper can be formulated as a linear quadratic tracking (LQT) problem with parametric uncertainties in system dynamics,
\begin{subequations}
    \begin{align}
        \min_{u(\tau)} J &= h(x_p(t_1)) + \int_{t_0}^{t_1} l(x_p, u, \tau) d\tau \label{equ:LQT_uncertain_1}\\
        \text{subject to } & x(t_0) = x_0 \\
        &\dot{x}_p(\tau) = A_p x_p(\tau) + B_p \Lambda (u(\tau) ) \quad \forall \tau \in [t_0, t_1] \label{equ:LQT_uncertain_model}\\
        & u(\tau)  \in U ,\quad \forall \tau \in [t_0, t_1]\\
        & U = \{u\in \mathbb{R}^{n_u}: \|u\| \leq u_{max}\} \label{equ:LQT_uncertain_2}
    \end{align}
\end{subequations}
Two assumptions are made to simplify the problem:
\begin{assumption}\label{assum:R_diagonal}
    $R$ is a diagonal matrix with identical entries, i.e., $\exists R_u > 0$ such that $R = R_u I_{n_u\times n_u}$.
\end{assumption}

\begin{assumption}\label{assum:known_xd}
    The exogenous signal $x_d(t)$ that is desired to be tracked and its derivative $\dot{x}_d(t)$ are known.
\end{assumption}

The main difficulty is that directly solving the optimization problem when $A_p$ and $\Lambda$ are unknown in the dynamics constraints is intractable. In addition, LQT problems with input constraints generally do not have an explicit solution because the solution involves exogenous signal $x_d(t)$, unlike linear quadratic regulation (LQR) problems where stabilization is the goal. 

The solution we propose, termed MSAC-MPC, proceeds thus. First, we propose a  magnitude-saturation constrained adaptive controller (MSAC), where the controller guarantees bounded solutions and persistent excitation of the underlying regressor guarantees learning of the parameters. The resulting parameter estimates are then used to switch to an MPC controller, and the overall cost is evaluated in terms of the residual parameter error after a finite time. Numerical examples are provided to illustrate the nature of the overall MSAC-MPC controller.

\section{Linear Quadratic Tracking Control}
We first address the LQT problem in \eqref{equ:LQT_uncertain_1}-\eqref{equ:LQT_uncertain_2} when $A_p$ and $\Lambda$ are known. When there is no control input constraint, i.e., $U = \mathbb{R}^{n_u}$, the unconstrained optimal controller $u_{uc}^*(\tau)$ assumes a linear feedback form, and the corresponding optimal cost-to-go function is quadratic in $x(\tau)$, \cite{underactuated}
\begin{subequations}\label{equ:uc_opt_u}
    \begin{align}
        u_{uc}^*(\tau) &= -R^{-1} (B_p \Lambda)^T(S_1^{uc}(\tau) + S_2^{uc} (\tau) x(\tau))\\
        V_{uc}^*(x, \tau) &= x(\tau)^T S_2^{uc}(\tau) x(\tau) + 2x(\tau)^T S_1^{uc}(\tau) + S_0^{uc}(\tau)
    \end{align}
\end{subequations}

\subsection{Linear feedback control}
In this subsection, we show the cost-to-go function associated with \eqref{equ:LQT_uncertain_1}-\eqref{equ:LQT_uncertain_2} under any linear feedback control is quadratic. We also provide a bound on the cost-to-go function in terms of error bounds on the control parameters. Following Lemma 1 in \cite{vrabie2008adaptive}, a cost-to-go function $V^{\pi}$ associated with a fixed feedback control policy $u(\tau) = \pi(x(\tau))$ can be found by solving the following equation,
\begin{align}\label{equ:HJB_policy_eval}
    l(x, u, \tau) + \frac{\partial V^{\pi}}{\partial x} (A_p x + B_p \Lambda u) + \frac{\partial V^{\pi}}{\partial \tau} = 0
\end{align}

In the case where $\pi(x(\tau))$ is a linear feedback control policy, by solving \eqref{equ:HJB_policy_eval}, we have the following proposition.
\begin{proposition}\label{prop: policy_evaluation}
    Under any linear feedback control law $u(\tau) = \pi(x, \tau) = K_1^{\pi}(\tau) x(\tau) + K_0^{\pi}(\tau)$, the cost-to-go function is quadratic, $V^{\pi} = x^T S_2^{\pi} x + 2x^T S_1^{\pi} + S_0^{\pi}$.
    \begin{subequations}
        \begin{align*}
            \dot{S}_2^{\pi} &= - Q - S_2^{\pi} (A_p+B_p\Lambda K_1^{\pi}) - (A_p+ B_p \Lambda K_1^{\pi})^T {S_2^{\pi}}^T\\
            &- {K_1^{\pi}}^T R K_1^{\pi} \numberthis{}\label{equ:policy_eval_1}\\
            \dot{S}_1^{\pi} &= Q x_d - [S_2^{\pi} K_0^{\pi} + (A_p+B_p\Lambda K_1^{\pi})^T {S_1^{\pi}}^T + {K_1^{\pi}}^T R K_0^{\pi}]\numberthis{}\\
            \dot{S}_0^{\pi} &= - x_d^T Q x_d - 2S_1^{\pi} K_0^{\pi} - {K_0^{\pi}}^T R K_0^{\pi}\numberthis{} \label{equ:policy_eval_2}
        \end{align*}
    \end{subequations}
    with boundary conditions,
    \begin{equation*}
        S_2^{\pi}(t_1) = Q_f, S_1^{\pi}(t_1) = -Q_f x_d(t_1), S_0^{\pi}(t_1) = x_d^T Q_f x_d
    \end{equation*}
\end{proposition}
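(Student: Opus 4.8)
The plan is to verify the quadratic ansatz $V^{\pi} = x^T S_2^{\pi} x + 2 x^T S_1^{\pi} + S_0^{\pi}$ by direct substitution into the policy evaluation equation \eqref{equ:HJB_policy_eval} and then matching coefficients in powers of $x$. First I would compute the two derivatives appearing in \eqref{equ:HJB_policy_eval}. Taking $S_2^{\pi}$ symmetric without loss of generality, the gradient is the row vector $\partial V^{\pi}/\partial x = 2 x^T S_2^{\pi} + 2 (S_1^{\pi})^T$, while the explicit time dependence gives $\partial V^{\pi}/\partial \tau = x^T \dot{S}_2^{\pi} x + 2 x^T \dot{S}_1^{\pi} + \dot{S}_0^{\pi}$.

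Next I would substitute the linear policy $u = K_1^{\pi} x + K_0^{\pi}$ into both the running cost and the dynamics. In the stage cost $l = (x - x_d)^T Q (x - x_d) + u^T R u$, expanding $u^T R u$ produces a quadratic term $x^T (K_1^{\pi})^T R K_1^{\pi} x$, a linear term $2 x^T (K_1^{\pi})^T R K_0^{\pi}$, and a constant $(K_0^{\pi})^T R K_0^{\pi}$, while the tracking part contributes $x^T Q x$, $-2 x^T Q x_d$, and $x_d^T Q x_d$. In the closed-loop drift I would write $A_p x + B_p \Lambda u = (A_p + B_p \Lambda K_1^{\pi}) x + B_p \Lambda K_0^{\pi}$, so that the term $(\partial V^{\pi}/\partial x)(A_p x + B_p \Lambda u)$ splits cleanly into contributions that are quadratic, linear, and constant in $x$.

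I would then collect the entire left-hand side of \eqref{equ:HJB_policy_eval} and group it by degree in $x$. Since \eqref{equ:HJB_policy_eval} must hold identically for every $x$, each group must vanish independently, which gives three equations: the quadratic forms yield the matrix Riccati-type ODE for $S_2^{\pi}$ in \eqref{equ:policy_eval_1}, the linear terms yield the vector ODE for $S_1^{\pi}$, and the constants yield the scalar ODE for $S_0^{\pi}$ in \eqref{equ:policy_eval_2}. Finally, the boundary conditions follow by evaluating the cost-to-go at the terminal time, where $V^{\pi}(x, t_1) = h(x) = (x - x_d(t_1))^T Q_f (x - x_d(t_1))$; matching the quadratic, linear, and constant parts against the ansatz gives $S_2^{\pi}(t_1) = Q_f$, $S_1^{\pi}(t_1) = -Q_f x_d(t_1)$, and $S_0^{\pi}(t_1) = x_d(t_1)^T Q_f x_d(t_1)$.

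The main obstacle is the bookkeeping in the coefficient-matching step, in particular the symmetrization of the quadratic part: the cross term $(\partial V^{\pi}/\partial x)(A_p + B_p \Lambda K_1^{\pi}) x = 2 x^T S_2^{\pi} (A_p + B_p \Lambda K_1^{\pi}) x$ is not symmetric as written and must be rewritten as $x^T [ S_2^{\pi} (A_p + B_p \Lambda K_1^{\pi}) + (A_p + B_p \Lambda K_1^{\pi})^T (S_2^{\pi})^T ] x$ before its matrix coefficient can be read off, which is the source of the two symmetric terms in \eqref{equ:policy_eval_1}. Care is likewise needed in tracking the transposes and the factor $B_p \Lambda$ through the linear and constant equations, since these propagate into the $S_2^{\pi}$-dependent term of the $S_1^{\pi}$ equation and the $S_1^{\pi}$-dependent term of the $S_0^{\pi}$ equation; once the terms are consistently symmetrized and ordered, the three ODEs and their boundary conditions follow directly.
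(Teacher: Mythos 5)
Your proposal is correct and follows essentially the same route the paper takes (substituting the quadratic ansatz into \eqref{equ:HJB_policy_eval}, using $\partial V^{\pi}/\partial x = 2x^TS_2^{\pi}+2(S_1^{\pi})^T$, and matching coefficients by degree in $x$, with the boundary conditions read off from the terminal cost); the paper offers no more detail than this. One remark: carrying the $B_p\Lambda$ factor through carefully, as you flag, yields $S_2^{\pi} B_p \Lambda K_0^{\pi}$ in the linear equation and $2 (S_1^{\pi})^T B_p \Lambda K_0^{\pi}$ in the constant equation, so your derivation would actually surface an apparent omission of $B_p\Lambda$ (and a transpose on $S_1^{\pi}$) in the proposition's displayed ODEs rather than reproduce them verbatim.
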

\begin{remark}
    When $K_1^{\pi} = -R^{-1} (B_p \Lambda)^T S_2^{uc}$ and $K_0^{\pi} = -R^{-1} B^T S_1^{uc}$, the unconstrained optimal control and optimal cost-to-go function are recovered, i.e., $u = u^*_{uc}$ and $V^{\pi} = V_{uc}^*$.
\end{remark}

Based on the results in Proposition~\ref{prop: policy_evaluation}, for two linear feedback control laws with similar gain matrices, the resulting cost-to-go functions will also be close in their values.
\begin{proposition}\label{prop: dist_LQ_HJB}
    For any two linear feedback laws $\pi_i(x, \tau) = K_1^{\pi_i}(\tau) x(\tau) + K_0^{\pi_i}(\tau)$, $i = 1, 2$, if there exists $\delta >0$, such that $\max_j\{\sup_{\tau \in [t_0, t_1]} \|K_j^{\pi_1} - K_j^{\pi_2}\|\} \leq \delta$, then the corresponding cost-to-go functions, defined by \eqref{equ:policy_eval_1}-\eqref{equ:policy_eval_2}, $V^{\pi_1}, V^{\pi_2}$, satisfy $|V^{\pi_1} - V^{\pi_2}| \leq O(\delta^2)$.
\end{proposition}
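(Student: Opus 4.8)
The plan is to exploit the quadratic form $V^{\pi_i} = x^T S_2^{\pi_i} x + 2 x^T S_1^{\pi_i} + S_0^{\pi_i}$ supplied by Proposition~\ref{prop: policy_evaluation} and to reduce the claim to bounding the coefficient gaps $\Delta S_k := S_k^{\pi_1} - S_k^{\pi_2}$ for $k \in \{0,1,2\}$ in terms of the gain gaps $\Delta K_j := K_j^{\pi_1} - K_j^{\pi_2}$, which satisfy $\sup_{\tau}\|\Delta K_j\| \le \delta$. Since $V^{\pi_1} - V^{\pi_2} = x^T \Delta S_2 x + 2 x^T \Delta S_1 + \Delta S_0$ for a fixed bounded $x$ on the finite horizon, a bound on each $\|\Delta S_k\|$ immediately controls $|V^{\pi_1} - V^{\pi_2}|$.

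First I would subtract the coefficient ODEs \eqref{equ:policy_eval_1}--\eqref{equ:policy_eval_2} written for $\pi_1$ and $\pi_2$. Splitting every product via $S_2^{\pi_1} M_1 - S_2^{\pi_2} M_2 = \Delta S_2 M_1 + S_2^{\pi_2}(M_1 - M_2)$ (with $M_i = A_p + B_p\Lambda K_1^{\pi_i}$) and the analogous identity for the quadratic-in-gain terms isolates, in each difference equation, a part proportional to $\Delta S_k$ (together with the already-estimated higher-index gaps, by the cascade structure of \eqref{equ:policy_eval_1}--\eqref{equ:policy_eval_2}) from a forcing part built solely out of the $\Delta K_j$. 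Each difference equation is then a linear time-varying ODE for $\Delta S_k$ with identical, hence vanishing, terminal data at $t_1$. Because $A_p$, $B_p\Lambda$, $Q$, $R$ and the trajectories $S_k^{\pi_2}$, $K_j^{\pi_i}$ are bounded on $[t_0,t_1]$, a Gr\"onwall estimate integrated backward from $t_1$ gives $\|\Delta S_k\| \le C \sup_\tau \|\mathcal{F}_k\|$, where $\mathcal{F}_k$ denotes the forcing term.

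The step I expect to be the main obstacle is pinning down the order in $\delta$. The forcing terms contain pieces such as $S_2^{\pi_2} B_p\Lambda\,\Delta K_1$ and $(K_1^{\pi_2})^T R\,\Delta K_1$, which are \emph{a priori} only $O(\delta)$, so the direct Gr\"onwall bound delivers $|V^{\pi_1}-V^{\pi_2}| = O(\delta)$ rather than $O(\delta^2)$. Upgrading to $O(\delta^2)$ requires the coefficient multiplying $\Delta K_1$ to vanish at leading order; that coefficient is proportional to $S_2^{\pi_2}B_p\Lambda + (K_1^{\pi_2})^T R$, which is zero precisely when $K_1^{\pi_2} = -R^{-1}(B_p\Lambda)^T S_2^{\pi_2}$, i.e. when $\pi_2$ is the optimal feedback of the Remark. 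I therefore anticipate that the quadratic rate is an optimality (envelope) phenomenon and that the argument must use the optimality of one of the two laws; if it does not, I would re-examine whether $O(\delta^2)$ can genuinely hold for two arbitrary feedback gains, for which the analysis above suggests only $O(\delta)$.

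A cleaner route that makes this explicit, and which I would use to confirm the rate, is a performance-difference identity. Following $\pi_1$ from $(x,\tau)$ and integrating $\frac{d}{ds}V^{\pi_2}(x(s),s)$ against \eqref{equ:HJB_policy_eval} for $\pi_2$ yields $V^{\pi_1}-V^{\pi_2} = \int_\tau^{t_1}\big[\,l(x,u_{\pi_1}) + \frac{\partial V^{\pi_2}}{\partial x}(A_p x + B_p\Lambda u_{\pi_1}) + \frac{\partial V^{\pi_2}}{\partial s}\,\big]\,ds$. Subtracting the policy-evaluation equation of $\pi_2$, which annihilates the terms evaluated at $u_{\pi_2}$, reduces the integrand to $2[\,R u_{\pi_2} + (B_p\Lambda)^T(S_2^{\pi_2}x + S_1^{\pi_2})\,]^T \Delta u + \Delta u^T R\,\Delta u$ with $\Delta u = u_{\pi_1}-u_{\pi_2} = O(\delta)$. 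When $\pi_2$ matches the optimal law of \eqref{equ:uc_opt_u} the bracketed first-order term is exactly zero, leaving only the quadratic $\Delta u^T R\,\Delta u = O(\delta^2)$, and integrating over the finite horizon gives $|V^{\pi_1}-V^{\pi_2}| = O(\delta^2)$.
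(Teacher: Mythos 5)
Your suspicion in the third paragraph is exactly right, and the gap you anticipated is in fact present in the paper's own argument. The paper's entire proof of Proposition~\ref{prop: dist_LQ_HJB} is the observation that, through \eqref{equ:policy_eval_1}--\eqref{equ:policy_eval_2}, each $S_j^{\pi}$ depends quadratically on the gains, hence $V^{\pi}$ is a quadratic function of $(K_1^{\pi},K_0^{\pi})$, from which it concludes $|V^{\pi_1}-V^{\pi_2}|=O(\delta^2)$. But a quadratic (indeed any smooth) function $f$ satisfies only $|f(K+\Delta K)-f(K)|=|Df(K)\Delta K|+O(\|\Delta K\|^2)=O(\delta)$ at a generic point; the first-order term disappears only where $Df=0$, i.e.\ at a stationary gain. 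Your Gr\"onwall computation makes the same point concretely: the forcing term proportional to $S_2^{\pi_2}B_p\Lambda+{K_1^{\pi_2}}^TR$ is genuinely $O(\delta)$ unless $K_1^{\pi_2}=-R^{-1}(B_p\Lambda)^TS_2^{\pi_2}$. A scalar example confirms the proposition is false as stated: for $\dot x=u$, cost $\int_0^1(x^2+u^2)\,d\tau$, $Q_f=0$, and gains $K_1^{\pi_1}=0$ versus $K_1^{\pi_2}=\delta$, solving \eqref{equ:policy_eval_1} backward gives $S_2^{\pi_2}(0)-S_2^{\pi_1}(0)=\delta+O(\delta^2)$, so the cost-to-go gap is $\Theta(\delta)$, not $O(\delta^2)$.

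Your performance-difference identity is the correct repair, and it matches the only way the proposition is actually used, namely \eqref{equ:cost_gap}, where one of the two laws is the optimal $u^*$: there the bracketed first-order term $2\bigl[Ru_{\pi_2}+(B_p\Lambda)^T(S_2^{\pi_2}x+S_1^{\pi_2})\bigr]^T\Delta u$ vanishes and only $\Delta u^TR\,\Delta u=O(\delta^2)$ survives. Two refinements if you write this up. First, the ``quadratic in $K$'' claim is itself loose, since the solution of \eqref{equ:policy_eval_1} involves the state-transition matrix of $A_p+B_p\Lambda K_1^{\pi}$ and is analytic rather than polynomial in the gain; your argument needs only boundedness on the finite horizon, so this is harmless for you. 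Second, in the paper's application $u^*$ is the \emph{constrained} optimum of Corollary~\ref{coro: ball_cons_LQT}, so stationarity is a KKT condition, $\nabla_u\mathcal{H}=-2\mu u^*$ with $\mu\ge 0$, and the first-order term becomes $-2\mu\,{u^*}^T(u_{\pi_1}-u^*)$; when both controls lie on the saturation sphere this equals $\mu\|u_{\pi_1}-u^*\|^2=O(\delta^2)$, but that extra line of geometry is needed and appears in neither your sketch nor the paper. In short: your $O(\delta)$ bound is the correct statement for two arbitrary feedback laws, and the $O(\delta^2)$ conclusion should be restated as holding only when one of the two laws is first-order optimal.
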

Quadratic coefficient matrices, $S_j^{\pi}$, are defined by differential equations in \eqref{equ:policy_eval_1}-\eqref{equ:policy_eval_2}. It is not difficult to see that $S_2^{\pi}$ is a quadratic function of $K_1^{\pi}$, $S_1^{\pi}$ is a quadratic function of $K_1^{\pi}, K_0^{\pi}$, and $S_0^{\pi}$ is a quadratic function of $K_0^{\pi}$. In addition, $V^{\pi}$ is a linear function of $S_2^{\pi}, S_1^{\pi}, S_0^{\pi}$, therefore, a quadratic function of $K_1^{\pi}, K_0^{\pi}$, which implies,
\begin{equation*}
    |V^{\pi_1} - V^{\pi_2}| = O(\max_{j = 0, 1}\{\sup_{\tau \in [t_0, t_1]} \|K_j^{\pi_1} - K_j^{\pi_2}\|\}) = O(\delta^2)
\end{equation*}

\subsection{Input Constrained LQT}
Now we consider the LQT problem with input constraints described in \eqref{equ:LQT_uncertain_1}-\eqref{equ:LQT_uncertain_2} with $U \subsetneqq \mathbb{R}^{n_u}$. In the following theorem, we show that the constrained optimal control input $u^*$ is the point in admissible set $U$ closest to unconstrained optimal control $u_{uc}^*$ under a distance metric defined by $R$, i.e. $d_R(x, y) = (x - y)^T R (x - y)$.
\begin{theorem}
    If $u^*$ is an optimal solution to \eqref{equ:LQT_uncertain_1}-\eqref{equ:LQT_uncertain_2}, then $u^*$ satisfies $u^* = argmin_{u\in U} (u - u_{uc}^*)^T R (u - u_{uc}^*)$.
\end{theorem}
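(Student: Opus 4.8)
The plan is to characterize $u^*$ through the pointwise minimization of the control Hamiltonian (Pontryagin's Minimum Principle, equivalently the minimization appearing inside the HJB optimality equation), and then to rewrite that Hamiltonian via completion of the square as an $R$-weighted distance to the unconstrained minimizer. Since the constraint in \eqref{equ:LQT_uncertain_2} keeps the input inside $U$ where the saturation is inactive, the dynamics \eqref{equ:LQT_uncertain_model} are affine in $u$, so the Hamiltonian associated with the cost \eqref{equ:LQT_uncertain_1} is
\[
H(x, u, \lambda, \tau) = (x - x_d)^T Q (x - x_d) + u^T R u + \lambda^T(A_p x + B_p \Lambda u),
\]
where $\lambda(\tau)$ is the costate along the optimal trajectory. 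By the Minimum Principle, for each $\tau \in [t_0, t_1]$ the optimal input obeys $u^*(\tau) = \arg\min_{u \in U} H(x, u, \lambda, \tau)$. Only the terms $g(u) := u^T R u + \lambda^T B_p \Lambda u$ depend on $u$, so this is equivalent to $u^*(\tau) = \arg\min_{u \in U} g(u)$.

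Next I would identify the unconstrained minimizer of $g$. Setting $\nabla_u g = 2 R u + (B_p\Lambda)^T \lambda = 0$ gives $u_{uc}^* = -\tfrac{1}{2} R^{-1}(B_p\Lambda)^T \lambda$, and substituting the value-gradient relation $\lambda = \partial V_{uc}^*/\partial x = 2(S_2^{uc} x + S_1^{uc})$ recovers exactly the feedback form in \eqref{equ:uc_opt_u}. Using $R u_{uc}^* = -\tfrac{1}{2}(B_p\Lambda)^T \lambda$, the central computation is a completion of the square:
\[
(u - u_{uc}^*)^T R (u - u_{uc}^*) = u^T R u + \lambda^T B_p \Lambda u + {u_{uc}^*}^T R \, u_{uc}^* = g(u) + {u_{uc}^*}^T R \, u_{uc}^*,
\]
where the additive term ${u_{uc}^*}^T R\, u_{uc}^*$ is independent of $u$. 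Consequently, minimizing $g$ over $U$ is identical to minimizing $(u - u_{uc}^*)^T R (u - u_{uc}^*)$ over $U$, which yields $u^* = \arg\min_{u \in U}(u - u_{uc}^*)^T R (u - u_{uc}^*)$, establishing the claim. Note this argument needs no convexity of $U$, since the reduction is purely algebraic and holds pointwise in $\tau$.

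The step I expect to demand the most care is the correct interpretation of the reference point $u_{uc}^*$, not the completion-of-square algebra, which is routine. The Minimum Principle supplies the costate $\lambda(\tau)$ along the \emph{constrained} optimal trajectory, whereas \eqref{equ:uc_opt_u} defines $u_{uc}^*$ through the \emph{unconstrained} value-function coefficients $S_j^{uc}$; these coincide only when the constraint is inactive. I would therefore make precise that the projection statement holds pointwise with $u_{uc}^*(\tau)$ read as the instantaneous unconstrained minimizer of $g$ determined by the current costate, evaluated along the constrained optimal trajectory, and argue that this is exactly the feedback map in \eqref{equ:uc_opt_u} applied along that trajectory. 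Pinning down this correspondence, rather than the quadratic manipulation, is the main obstacle.
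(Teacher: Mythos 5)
Your proposal is correct and follows essentially the same route as the paper's proof: apply Pontryagin's principle, isolate the $u$-dependent part $u^TRu + p^TB_p\Lambda u$ of the Hamiltonian, and complete the square about $u_{uc}^* = -\tfrac{1}{2}R^{-1}(B_p\Lambda)^Tp$ so that minimizing over $U$ becomes an $R$-weighted projection. The interpretive subtlety you flag at the end (the costate along the constrained trajectory versus the unconstrained value-function coefficients $S_j^{uc}$) is real, and the paper handles it exactly as you propose to, by taking $u_{uc}^*$ to mean the instantaneous unconstrained minimizer of the reduced Hamiltonian determined by the current costate.
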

\begin{proof}
    Let $\mathcal{H}$ be the Hamiltonian of \eqref{equ:LQT_uncertain_1}-\eqref{equ:LQT_uncertain_2}, 
    \begin{equation*}
        \mathcal{H} = p_0 l(x(\tau), u(\tau), \tau) + p(\tau)^T (A_p x(\tau) + B_p \Lambda u(\tau))
    \end{equation*}
    where $p_0 \in {0, 1}$ is a binary variable constant in $\tau$, $p(\tau) \in \mathbb{R}^{n_x}$ is the Lagrange multiplier. Since there is no additional equality constraint other than system dynamics, $p_0 = 1$ \cite{mangasarian1966sufficient}.
    
    Using Pontryagin's Maximum Principle (PMP), by minimizing $\mathcal{H}$ with resepct to $u$ subject to input constraint $u \in U$, the optimal control input under constraint $u^*$ is
    \begin{align*}
        u^* = argmin_{u\in U} {u(\tau)^T R u(\tau) + p(\tau)^T B_p \Lambda u(\tau)}
    \end{align*}
    Note that for the reduced quadratic cost, unconstrained optimal solution is $u_{uc}^* = -\frac{1}{2}R^{-1}(B_p\Lambda)^T p$, which satisfies
    \begin{align*}
        u^T R u + p^T B_p \Lambda u &= (u - u_{uc}^*)^T R (u - u_{uc}^*) - {u_{uc}^*}^T R u_{uc}^*
    \end{align*}
    Since $u_{uc}^*$ does not depend on $u$ explicitly, we have
    \begin{equation*}
        u^* = argmin_{u\in U} (u - u_{uc}^*)^T R (u - u_{uc}^*)
    \end{equation*}
    
    The Lagrange multipliers $p(\tau)$ are defined by the following differential equations and boundary conditions, which are the adjoint equations and transversality conditions in PMP,
    \begin{align*}
        \dot{p}(\tau) &= -\nabla_x \mathcal{H} = p(\tau)^T A_p - 2 Q(x(\tau) - x_d(\tau))\\
        p(t_1)  &= 2Q(x(t_1) - x_d(t_1))\\
        0 &= l(t_1) + p(t_1)^T(A_p x(t_1) + B_p \Lambda u(t_1))
    \end{align*}
    
\end{proof}
\begin{corollary}\label{coro: ball_cons_LQT}
    Under Assumption~\ref{assum:R_diagonal}, and the admissible set $U$ is a ball around origin, i.e. $U = \{u \in \mathbb{R}^{n_u}: \|u\|^2 \leq u_{max}\}, u_{max} > 0$, $u^*$ can be written as $u^* = u_{uc}^* \frac{u_{max}}{\|u_{uc}^*\|}$, which is a linear feedback controller.
\end{corollary}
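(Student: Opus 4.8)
The plan is to invoke the characterization already proved in Theorem 1, namely that any optimal $u^*$ satisfies $u^* = \arg\min_{u \in U}(u - u_{uc}^*)^T R (u - u_{uc}^*)$, and then specialize it to the two structural hypotheses of the corollary. Under Assumption~\ref{assum:R_diagonal} we have $R = R_u I$ with $R_u > 0$, so the $R$-weighted distance collapses to a positive multiple of the Euclidean distance, $(u - u_{uc}^*)^T R (u - u_{uc}^*) = R_u \|u - u_{uc}^*\|^2$. Since dropping the positive factor $R_u$ does not change the minimizer, the problem reduces to computing the Euclidean projection of the point $u_{uc}^*$ onto the closed ball $U = \{u : \|u\| \le u_{max}\}$.

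First I would note that $U$ is closed, convex, and bounded while $\|u - u_{uc}^*\|^2$ is strictly convex, so the minimizer exists and is unique; this legitimizes writing $u^*$ as a well-defined function. I would then split into the two cases. If $\|u_{uc}^*\| \le u_{max}$, the point already lies in $U$ and is trivially its own projection, so $u^* = u_{uc}^*$. If $\|u_{uc}^*\| > u_{max}$, I would claim the projection is the radial boundary point $u^* = u_{uc}^* \, u_{max}/\|u_{uc}^*\|$, which is exactly the expression in the statement.

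To verify the radial point is indeed the projection I would use the variational inequality characterizing projection onto a convex set: $u^*$ is the projection of $u_{uc}^*$ onto $U$ if and only if $(u_{uc}^* - u^*)^T(u - u^*) \le 0$ for all $u \in U$. For the radial candidate, $u_{uc}^* - u^* = \bigl(\|u_{uc}^*\| - u_{max}\bigr)/\|u_{uc}^*\| \cdot u_{uc}^*$ is a strictly positive multiple of $u^*$, so the inequality reduces to ${u^*}^T u \le \|u^*\|^2 = u_{max}^2$, which follows at once from Cauchy--Schwarz together with $\|u\| \le u_{max}$. Equivalently, one could set up the KKT/Lagrangian stationarity conditions for the single norm constraint, solve for the multiplier, and recover the same radial scaling.

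Finally, to justify the closing phrase ``linear feedback controller,'' I would substitute the unconstrained optimal law $u_{uc}^* = -R^{-1}(B_p\Lambda)^T(S_1^{uc} + S_2^{uc} x)$ from \eqref{equ:uc_opt_u} and observe that $u^*$ is always a positive scalar multiple of this affine-in-$x$ expression, so the saturated input acts along the unconstrained feedback direction and only its magnitude is rescaled. The one genuinely delicate point, and the step I expect to require the most care, is precisely this last claim: the scaling factor $u_{max}/\|u_{uc}^*(x)\|$ depends on $x$, so in the saturated regime $u^*$ is not literally linear in $x$. I would therefore phrase the conclusion as preserving the linear-feedback \emph{direction} (equivalently, $u^* = B_{sat}(u_{uc}^*)$), and make explicit that the displayed formula is the active-saturation branch while $u^* = u_{uc}^*$ holds whenever the constraint is inactive.
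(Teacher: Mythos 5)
Your proposal is correct and follows essentially the same route as the paper: invoke Theorem~1, use $R = R_u I$ to reduce the $R$-weighted minimization to the Euclidean projection of $u_{uc}^*$ onto the ball $U$, and read off the radial scaling. The extra care you take is warranted rather than redundant --- the paper's one-line proof omits both the inactive case $\|u_{uc}^*\| \le u_{max}$ (where $u^* = u_{uc}^*$, not the displayed formula, which would otherwise scale the input \emph{up}) and the fact that the state-dependent factor $u_{max}/\|u_{uc}^*(x)\|$ makes $u^*$ only directionally linear in $x$, so your reading $u^* = B_{sat}(u_{uc}^*)$ with the displayed formula as the active-saturation branch is the accurate form of the statement.
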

\begin{proof}
    By Theorem 1, when $R = R_u I_{n_u \times n_u}$, 
    \begin{equation*}
        u^* = argmin_{u\in U} R_u \|u - u_{uc}^*\|^2 = argmin_{u\in U} \|u - u_{uc}^*\|^2
    \end{equation*}
    If $U$ is a ball of radius $u_{max}$ around the origin, then the closest point to $u^*$ is the projection of $u^*$ on the outside sphere, therefore, $u^* = u_{uc}^* \frac{u_{max}}{\|u_{uc}^*\|}$.
\end{proof}

\section{MSAC-MPC: Adapt, Learn, Optimize}
In the previous section, we have established that the optimal solution to a input-constrained LQT problem with known system matrices \eqref{equ:LQT_uncertain_1}-\eqref{equ:LQT_uncertain_2} is a linear feedback controller, with the assumption that the parameters were known. It should be noted that these parameters were directly used in the LQT control design. Now we address the input-constrained LQT problem with parametric uncertainties. As illustrated in Figure~\ref{fig:flow_chart}, the proposed MSAC-MPC controller first adapts over $[0, T_{adap}]$ and then switches to an MPC controller. 

The main difficulty in extending the approach described in Section III lies in computing optimal control while parametric uncertainties are present in the system dynamics. Adaptive MPC approaches, e.g., in \cite{adetola2009adaptive}, propose to estimate the unknown parameters based on state measurements while computing optimal control using currently estimated parameter values. While these approaches provide stability guarantees through robust MPC designs, the resulting min-max optimization problems are computationally quite burdensome. From a real-time control perspective, when computation power is limited and control input needs to be determined within a short time, the proposed MSAC-MPC controller may prove to be attractive. During $[t_0, t_0 + T_{adap}]$, our controller focuses primarily on a stable solution and on learning the unknown parameters. After this finite time, we show that one can switch to MPC with a well-defined optimality gap that is proportional to the parameter error that is present for $t\geq t_0 + T_{adap}$.

\begin{figure}[H]
    \centering
    \includegraphics[width=0.5\textwidth]{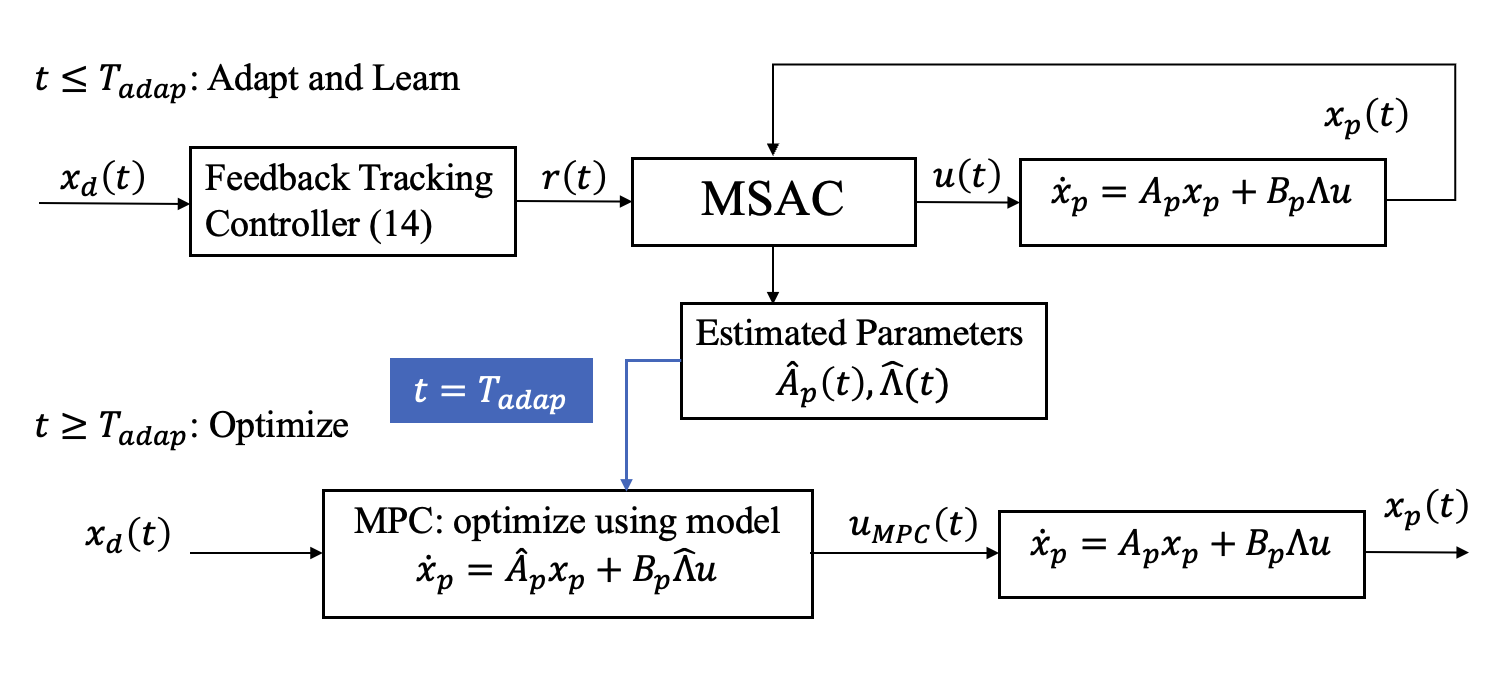}
    \caption{A diagram of MSAC-MPC Algorithm ($t_0$ is assumed to be zero).}
    \label{fig:flow_chart}
\end{figure}

\subsection{MSAC: Adapt and Learn}
The MSAC controller in this paper follows the adaptive control structure proposed in \cite{schwager2005direct} for multi-input systems, but replaces the parameter adaptation by a high-order tuner in \cite{gaudio2020class}. The motivation behind such a combination is that we explicitly accommodate input constraints and guarantee boundedness as in \cite{schwager2005direct} while ensuring that the speed of convergence is fast when compared to \cite{schwager2005direct}, as in \cite{gaudio2020class}.

\subsubsection{Adaptive control design}
As in all adaptive control designs \cite{narendra2012stable}, a known reference system is chosen as,
\begin{equation}\label{equ:ref_sys}
    \dot{x}_m = A_m x_m + B_m r
\end{equation}
where $x_m \in \mathbb{R}^{n_x}$, $r \in \mathbb{R}^{n_u}$ is a bounded reference input satisfying $\|r\| \leq r_{max}$, and $A_m$ is a Hurwitz matrix, and $B_m$ is full column rank.

The magnitude saturated multi-input model reference adaptive controllrer (MSAC) is defined as (see \cite{schwager2005direct} for details),
\begin{align}\label{equ:AC}
    u = \widehat{\Theta} \Phi,\quad \widehat{\Theta} = [\hat{K}_x, \hat{K}_r], \quad \Phi = [x_p^T, r^T]^T
\end{align}

We assume there exists an ideal matrix $K_x \in \mathbb{R}^{n_u\times n_x}$ and an ideal vector $K_r \in \mathbb{R}^{n_u \times n_u}$ corresponding to the unknown parameters in \eqref{equ:plant}, which satisfy
\begin{align*}
    A_p + B_p \Lambda K_x = A_m, \quad B_p\Lambda K_r &= B_m
\end{align*}

The parameter estimation errors are defined as,
\begin{equation*}
    \Tilde{K}_x = \hat{K}_x - K_x, \quad \Tilde{K}_r = \hat{K}_r - K_r, \quad \widetilde{\Theta} = [\Tilde{K}_x, \Tilde{K}_r]
\end{equation*}

Let $\Delta u = B_{sat}(u) - u$. The closed-loop dynamics is,
\begin{equation}\label{equ:CL_plant}
    \dot{x}_p = (A_p + B_p \Lambda K_x) x_p + B_p \Lambda K_r r + B_p\Lambda (\widetilde{\Theta} \Phi + \Delta u)
\end{equation}

By subtracting \eqref{equ:ref_sys} from \eqref{equ:plant} and defining error $e = x_p - x_m$, the closed-loop error dynamics is obtained as,
\begin{equation}\label{equ:e_dyn}
    \dot{e} = A_m e + B_p\Lambda (\widetilde{\Theta} \Phi + \Delta u)
\end{equation}

To address the nonlinear disturbance $\Delta u$ introduced by the input magnitude saturation, we introduce an auxiliary error $e_{\Delta}$ generated by the following dynamics,
\begin{align*}
    \dot{e}_{\Delta} = A_m e_{\Delta} + B_p diag(\hat{\lambda})\Delta u
\end{align*}
where $\hat{\lambda} \in \mathbb{R}^{n_u}$ is a vector that estimates the unknown diagonal entries of $\Lambda$. We also denote the diagonal vector of $\Lambda$ as $\lambda$, and define $\Tilde{\lambda} = \hat{\lambda} - \lambda$. 

Define $e_u = e - e_{\Delta}$, the augmented error $e_u$ is used to establish the stability results, whose error dynamics is,
\begin{align}\label{equ:e_u_dyn}
    \dot{e}_u = A_m e_u + B_p[\Lambda \widetilde{\Theta}, diag(\Tilde{\lambda})] [\Phi^T \Delta u^T]^T
\end{align}
Let $\Theta_a = [{\Theta}, \Lambda]$, $\widehat{\Theta}_a = [\widehat{\Theta}, diag(\hat{\lambda})]$, $\Phi_a = [\Phi^T, -\Delta_u^T]^T$. The High-order Tuner-based Adaptation laws are chosen as,
\begin{subequations}\label{equ:HT_adap}
    \begin{align}
        \dot{\Xi}_a &= -\gamma B_p^T P e_u \Phi_a^T \label{equ:HT_1}\\
        \dot{\widehat{\Theta}}_a &= -\beta(\widehat{\Theta}_a - \Xi_a) \mathcal{N}_t\\
        \mathcal{N}_t &= 1 + \mu \Phi_a^T \Phi_a, \quad \mu \geq \frac{2\gamma}{\beta}\|PB_p\|^2_F \label{equ:HT_2}
    \end{align}
\end{subequations}
where $P$ is the solution to Lyapunov equation, $A_m^T P + PA_m = -Q$ and $Q$ is a positive definite matrix.


The goal of the adaptive controller is to show that the adaptation laws in \eqref{equ:HT_1}-\eqref{equ:HT_2} guarantee that the tracking error $e$ remains small and that all signals in the closed-loop system are bounded. Once this is accomplished, persistent excitation arguments are to be used to ensure that $\widehat{\Theta}_a$ converges to the true value. We now link the convergence of the error $e$ to the tracking of the desired signal $x_d$ by specifying the reference input $r(t)$ in the reference model as follows.

Using Assumption~\ref{assum:known_xd}, we choose the reference input $r$ as
\begin{equation}\label{equ:feedback_r}
    r = (B_m^T B_m)^{-1} B_m^T(- A_m x_d + \dot{x}_d)
\end{equation}
Let $e_m = x_m - x_d$. Since the error dynamics is $\dot{e}_m = A_m e_m$, and $A_m$ is Hurwitz, we have that $\|e_m(t)\| \to 0$ as $t\to \infty$.

Now that we have a complete MSAC design described in \eqref{equ:AC}, \eqref{equ:e_u_dyn}, \eqref{equ:HT_1}-\eqref{equ:HT_2}, and \eqref{equ:feedback_r}, we are ready to state stability results and parameter learning results.

\subsubsection{Stability results}
Let $\widetilde{\Theta}_a = \widehat{\Theta}_a - \Theta_a$. Consider a Lyapunov function candidate, 
\begin{align*}
    V &= e_u^T P e_u + \frac{1}{\gamma} Tr\left[ (\Xi_a - \Theta_a)^T \Lambda_a^T (\Xi_a - \Theta_a) \right]\\
    &+ \frac{1}{\gamma} Tr\left[ (\widehat{\Theta}_a - \Xi_a)^T \Lambda_a^T (\widehat{\Theta}_a - \Xi_a) \right] \numberthis{} \label{equ:Lyapunov}
\end{align*}
where $\Lambda_a = \begin{bmatrix}
    \Lambda & 0 \\ 0 & I_{n_u \times n_u} 
\end{bmatrix}$
is used for compact notation.

It follows that 
\begin{align*}
    \dot{V} &= e_u^T (P A_m + A_m^T P) e_u + 2 e_u^T P B_p [\Lambda \widetilde{\Theta}\Phi - diag(\Tilde{\lambda})\Delta u]\\
    &-Tr[(\Xi_a - \Theta_a)^T(\Lambda_a + \Lambda_a^T) B_p^T P e_u \Phi_a^T]\\
    &-\frac{\beta}{\gamma} Tr\left[ (\widehat{\Theta}_a - \Xi_a)^T(\Lambda_a + \Lambda_a^T) (\widehat{\Theta}_a - \Xi_a) \right]\mathcal{N}_t\\
    &+ Tr\left[ (\widehat{\Theta}_a - \Xi_a)^T (\Lambda_a + \Lambda_a^T) B_p^T P e_u \Phi_a^T \right]\\
    &= -e_u^T Q e_u + 2 e_u^T P B_p [\Lambda \widetilde{\Theta}\Phi - diag(\Tilde{\lambda})\Delta u]\\
    &-Tr\left[ (\Xi - \Theta)^T(\Lambda + \Lambda^T) B_p^T P e_u \Phi^T\right]\\
    &+ 2Tr\left[(diag(\lambda_\epsilon) - diag(\hat{\lambda}))^T B_p^T P e_u \Delta u^T \right]\\
    &- \frac{\beta}{\gamma} \mathcal{N}_t [ Tr[(\widehat{\Theta}-\Xi)^T(\Lambda + \Lambda^T)(\widehat{\Theta}-\Xi)]\\
    &+ 2Tr(diag(\hat{\lambda}) - diag(\lambda_{\epsilon}))^T(diag(\hat{\lambda}) - diag(\lambda_{\epsilon})) ]\\
    &+ Tr[(\widehat{\Theta}-\Xi)^T(\Lambda + \Lambda^T)B_p P e_u \Phi^T]\\
    &+ 2 Tr[(diag(\hat{\lambda}) - diag(\lambda_{\epsilon}))^TB_p^T P e_u (-\Delta u)^T]\\
    &= -e_u^T Q e_u + 4e_u^TPB_p\Lambda (\widehat{\Theta} - \Xi)\Phi \\
    &-\frac{\beta}{\gamma}(1+\mu\|\Phi\|^2)Tr[(\widehat{\Theta}-\Xi)^T(\Lambda + \Lambda^T)(\widehat{\Theta}-\Xi)]\\
    &- 4 e_u^T P B_p (diag(\hat{\lambda})-diag(\lambda_{\epsilon}))\Delta u- \frac{\beta}{\gamma}(1 + \mu\|\Delta u\|^2)\\ &\cdot 2Tr[(diag(\hat{\lambda}) - diag(\lambda_{\epsilon}))^T(diag(\hat{\lambda}) - diag(\lambda_{\epsilon}))]\\
    &\leq -2\|e_u\|^2 + 4\|e_u\|\|P B_p \Lambda(\widehat{\Theta}-\Xi)\|_F \|\Phi\|\\
    &-\frac{2\beta}{\gamma}(1 + \frac{2\gamma}{\beta}\|PB\|_F^2\|\Phi\|^2) Tr\left[ (\widehat{\Theta}-\Xi)^T\Omega^T \Omega (\widehat{\Theta}-\Xi) \right]\\
    &+ 4\|e_u\|\|P B_p(diag(\hat{\lambda}) - diag(\lambda_{\epsilon}))\|_F\|\Delta u\|\\
    &-\frac{2\beta}{\gamma}(1 + \frac{2\gamma}{\beta}\|P B_p\|^2_F\|\Delta u\|^2)\\
    &\cdot Tr[(diag(\hat{\lambda}) - diag(\lambda_{\epsilon}))^T(diag(\hat{\lambda}) - diag(\lambda_{\epsilon}))] \\
    &= -\left[ \|e_u\| - 2\|P B_p\|_2 \|(\widehat{\Theta} - \Xi)^T\Omega\|_F\|\Phi\|\right]^2\\
    & -\left[ \|e_u\| - 2\|P B_p\|_2 \|diag(\hat{\lambda}) - diag(\lambda_{\epsilon})\|_F \right]^2 \leq 0
\end{align*}

Therefore, $e_u, \widehat{\Theta}_a, \Xi_a \in \mathcal{L}_{\infty}$. Define $K_{max} = \max \|\widetilde{\Theta}_a\|$,
\begin{equation*}
    K_{max} \geq \max \left( \sup\|\Tilde{K}_x\|, \sup\|\Tilde{K}_r\|, \sup\|\Tilde{\lambda}\| \right)
\end{equation*}

We define the following variables for compact notations in the error bound definition:
\begin{align*}
    q_{min} &= \min eig(Q), p_{min} = \min eig(P), p_{max} = \max eig(P),\\
    \rho &= \sqrt{\frac{p_{max}}{p_{min}}}, \Bar{u}_{min} = \min_i (u_{max, i}), \Bar{u}_{max} = \max_i (u_{max, i}),\\
    P_B &= \|P B_p \Lambda\|, \lambda_{min} = \min(eig(\Lambda))\\
\end{align*}
Additionally, the following variable definitions are used in the main stability result, Theorem~\ref{thm: stability}.
{\small
\begin{subequations}
    \begin{align*}
        \beta &= \frac{P_B K_{max}}{\|K_x^*\| + K_{max}}, a_0 = \frac{\Bar{u}_{min} K_{max}}{\|K_x^*\| + K_{max}}\\
        x_{min} &= \frac{3 P_B K_{max} (r_{max}+1) + 3P_B \|K_r^*\| r_{max}}{q_{min} - 3 P_B K_{max}}\\ 
        &+ \frac{2P_B \Bar{u}_{max}}{q_{min} - 3 P_B K_{max}}\\
        x_{max} &= \frac{P_B a_0}{|q_{min} - 2 P_B \|K_x^*\| |}\\
        \Bar{K}_{max} &= \frac{q_{min} - \frac{\rho}{a_0}\left( 3\|K_r^*\| r_{max} + 2\Bar{u}_{max} \right)|q_{min}}{3 P_B + \frac{3\rho}{a_0}(r_{max}+1)|q_{min} - 2P_B \|K_x^*\| | }\\
        &- \frac{2 P_B \|K_x^*\| |}{3 P_B + \frac{3\rho}{a_0}(r_{max}+1)|q_{min} - 2P_B \|K_x^*\| | }
    \end{align*}
\end{subequations}
} where all vector norms are 2-norm and the matrix norm in $P_B$ is the induced matrix norm, which implies $\|P B_p \Lambda x\| \leq P_B \|x\|$.

\begin{theorem}\label{thm: stability}
    For the closed-loop system with adaptive controller and adaptation laws described in \eqref{equ:plant} \eqref{equ:AC}, and \eqref{equ:HT_1}-\eqref{equ:HT_2}, $x(t)$ has bounded trajectories for $t \geq t_0$ if
    \begin{enumerate}
        \item $\|x(t_0)\| < \frac{x_{max}}{\rho}$
        \item $\sqrt{V(t_0)} < \Bar{K}_{max}\sqrt{\frac{\lambda_{min}}{\gamma_{max}}}$
    \end{enumerate}
    Moreover, $\|x(t)\| < x_{max}, \forall t \geq t_0$, and the error variable is of the same order as the difference between saturated input $B_{sat}(u(t))$ and the unsaturated $u(t)$, i.e., 
    \begin{equation*}
        \|e\| = \|x_p - x_m\| = O\left[ \sup_{\tau \leq t}\|\Delta u(\tau)\| \right]
    \end{equation*}
\end{theorem}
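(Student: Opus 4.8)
The plan is to build entirely on the inequality $\dot V \le 0$ already verified for the Lyapunov candidate \eqref{equ:Lyapunov}, and to convert it into the two assertions of the theorem — the state bound $\|x(t)\| < x_{max}$ and the order relation for $\|e\|$ — through a first-exit-time (invariant-set) argument together with a decomposition of the tracking error. First I would record the immediate consequences of $\dot V \le 0$: since $V$ is nonincreasing, $V(t) \le V(t_0)$ for all $t \ge t_0$, and because $V$ dominates both $e_u^T P e_u \ge p_{min}\|e_u\|^2$ and the parameter-error terms weighted by the positive-definite $\Lambda_a$, one obtains simultaneous bounds $\|e_u(t)\| \le \sqrt{V(t_0)/p_{min}}$ and $\|\widetilde{\Theta}_a(t)\|$ bounded by a multiple of $\sqrt{\gamma_{max} V(t_0)/\lambda_{min}}$. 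The second hypothesis, $\sqrt{V(t_0)} < \bar{K}_{max}\sqrt{\lambda_{min}/\gamma_{max}}$, is precisely what forces the parameter error to stay below $\bar{K}_{max}$, i.e. $K_{max} \le \bar{K}_{max}$, so that $\widehat{\Theta}$ remains within a known ball.

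The core of the boundedness claim is a contradiction on the first exit time. Define $t^* = \inf\{t \ge t_0 : \|x(t)\| = x_{max}\}$ and suppose $t^* < \infty$. On $[t_0, t^*]$ the state satisfies $\|x\| \le x_{max}$, so the regressor $\Phi = [x_p^T, r^T]^T$ is bounded by a constant involving $x_{max}$ and $r_{max}$, the control $u = \widehat{\Theta}\Phi$ is bounded in terms of $\|K_x^*\| + K_{max}$ and $x_{max}$, and the saturation mismatch $\Delta u$ is likewise controlled (it vanishes whenever $\|u\| \le u_{max}$ and is otherwise bounded by the overshoot of $\|u\|$). Feeding these bounds together with $V(t^*) \le V(t_0)$ into the comparison between $\sqrt{x^T P x}$ and the closed-loop dynamics \eqref{equ:CL_plant}, I would show that the constants $x_{min}, x_{max}, a_0, \beta, \bar{K}_{max}$ have been tuned so that $\|x(t^*)\| < x_{max}$ strictly, contradicting the definition of $t^*$. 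Hence $t^* = \infty$, and since the first hypothesis places $\|x(t_0)\| < x_{max}/\rho \le x_{max}$ inside the region, $\|x(t)\| < x_{max}$ for all $t \ge t_0$; boundedness of $x_m$ (Hurwitz $A_m$, bounded $r$) then yields boundedness of all closed-loop signals.

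For the order relation, I would use the splitting $e = e_u + e_\Delta$. The auxiliary state obeys $\dot e_\Delta = A_m e_\Delta + B_p\, diag(\hat{\lambda})\Delta u$ with $A_m$ Hurwitz and $\hat{\lambda}$ bounded, so a standard convolution/ISS estimate gives $\|e_\Delta(t)\| \le c_1 \sup_{\tau \le t}\|\Delta u(\tau)\|$. For $e_u$, whose dynamics \eqref{equ:e_u_dyn} are driven by $B_p\Lambda\widetilde{\Theta}\Phi$ and $B_p\, diag(\tilde{\lambda})\Delta u$, I would invoke the perfect-square form of $\dot V$: the nonincrease of $V$ makes the two nonnegative square terms integrable, which, combined with the already-established boundedness of $\Phi$ and $\dot e_u$, pins $\|e_u\|$ to the same order as the residual saturation input. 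Adding the two contributions gives $\|e\| = O[\sup_{\tau \le t}\|\Delta u(\tau)\|]$.

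The main obstacle is the circular dependence introduced by saturation: $\Delta u$ is a function of $u = \widehat{\Theta}\Phi$ and hence of $x_p$, while the bound on $x_p$ itself relies on controlling $\Delta u$ and $e_u$. Breaking this loop is exactly what the first-exit-time argument and the delicate tuning of $x_{max}, \bar{K}_{max}, a_0, \beta$ accomplish, and verifying that the resulting chain of inequalities closes \emph{strictly} (rather than merely nonstrictly) at $t^*$ is the most technical step. A secondary difficulty is that $\dot V \le 0$ by itself yields only boundedness of $e_u$, so extracting the sharper estimate $\|e_u\| = O[\sup\|\Delta u\|]$ requires the full two-square structure of $\dot V$ rather than a crude $-\|e_u\|^2$ bound.
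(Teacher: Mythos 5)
Your overall architecture matches the paper's intended proof, which is deliberately only sketched there: the paper establishes $\dot V \le 0$ for the candidate \eqref{equ:Lyapunov} and then defers the boundedness argument to Theorem~1 of \cite{schwager2005direct}, whose structure is exactly your first-exit-time contradiction with the constants $x_{min}, x_{max}, a_0, \beta, \bar{K}_{max}$ tuned so that the chain of inequalities closes strictly, and whose error estimate uses the same splitting $e = e_u + e_\Delta$ with an ISS/convolution bound on $e_\Delta$. Your reading of hypothesis~2 as the condition forcing $K_{max} \le \bar{K}_{max}$ is also the paper's mechanism.

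One step of your plan does not work as stated, though. You claim that the perfect-square form of $\dot V$ ``pins $\|e_u\|$ to the same order as the residual saturation input.'' It does not: integrability of the two squares (plus a Barbalat-type argument) forces $\|e_u\|$ toward $2\|PB_p\|_2\|(\widehat{\Theta}-\Xi)^T\Omega\|_F\|\Phi\|$, i.e.\ toward the \emph{tuner lag} $\|\widehat{\Theta}_a-\Xi_a\|$ scaled by the regressor, which has no a priori relation to $\sup\|\Delta u\|$. The correct route is to show separately that $e_u$ is asymptotically negligible --- using the $-\tfrac{\beta}{\gamma}\mathcal{N}_t\,Tr[(\widehat{\Theta}_a-\Xi_a)^T(\cdot)(\widehat{\Theta}_a-\Xi_a)]$ term in $\dot V$ to place $\|\widehat{\Theta}_a-\Xi_a\|$ in $\mathcal{L}_2$ and conclude $e_u\to 0$ --- so that the entire $O\bigl[\sup_{\tau\le t}\|\Delta u(\tau)\|\bigr]$ contribution to $\|e\|$ is carried by $e_\Delta$ alone. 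With that repair the order relation follows exactly as in \cite{schwager2005direct}; without it, your attribution of the $\Delta u$ dependence to $e_u$ is a misstep that a literal execution of the proof would not be able to close.
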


The proof is very similar to the one for Theorem~1 in \cite{schwager2005direct}. Note that in the proof especially parts related to the error bounds, adaptation laws are not directly involved, instead, parameter estimation error upper bound $K_{max}$ is used to bound the error term. The boundedness of $K_{max}$ relies on $\widetilde{\Theta}_a \in \mathcal{L}_{\infty}$, which is proved by showing the candidate in \eqref{equ:Lyapunov} is indeed a Lyapunov function. 

It should be noted that the stability results hold for any initial values of control gain matrix estimates $\hat{K}_1(t_0), \hat{K}_0(t_0)$. The main advantage of the MSAC is that it can provide stable solutions without any prior knowledge of unknown parameters, in comparison to policy iteration methods such as \cite{jiang2012computational} where an initial stable controller is assumed to be available, which requires at least partial information about the unknown parameters.

\subsubsection{Parameter learning}
Now that boundedness and asymptotic properties of the tracking error are established we proceed to learning of the unknown parameters. In order to learn $\Theta_a$, the regressor $\Phi_a$ must satisfy persistent excitation properties. We now introduce definitions of persistent excitation and the conditions for parameter learning.
\begin{definition}
    A bounded function $\Phi: [t_0, \infty) \to \mathbb{R}^{n_u}$ is persistently exciting (PE) if there exists $T > 0$ and $\alpha > 0$ such that
    \begin{equation}
        \int_t^{t+T} \Phi(\tau) \Phi^T(\tau) d\tau \geq \alpha I, \quad \forall t \geq t_0.
    \end{equation}
\end{definition}

If $\|\dot{\Phi}(t)\|$ is bounded for all $t$, equivalently, an alternative definition can be given as follows,
\begin{definition}
    $\Phi$ is PE if there exists an $\epsilon > 0$, a $t_2$ and a sub-interval $[t_2, t_2 + \delta_0] \subset [t, t+T]$ such that for all unit vectors $\omega \in \mathbb{R}^{n_u}$
    \begin{equation}\label{equ:PE}
        \frac{1}{T}\bigg\lvert\int_{t_2}^{t_2 + \delta_0} \Phi(\tau)^T \omega d \tau \bigg\rvert \geq \epsilon, \quad \forall t \geq t_0
    \end{equation}
\end{definition}

\begin{lemma}\label{lemma: theta_bound}
    Let $\epsilon$ and $\delta$ be given positive numbers. There exists $T = T(\epsilon, \delta)$ such that if $z(t) = [e_u(t)^T, \widetilde{\Theta}_a(t)^T]^T$ is a solution with $\|z(t_1)\| \leq \epsilon_1$, then there exists some $t_2 \in [t, t + T]$ such that $\|\widetilde{\Theta}_a(t_2)\|\leq \delta$.
\end{lemma}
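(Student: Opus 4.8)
The plan is to recast the statement as a reaching-time version of uniform asymptotic stability for the combined error state $z = [e_u^T, \widetilde{\Theta}_a^T]^T$ and to prove it by contradiction, using the Lyapunov function $V$ of \eqref{equ:Lyapunov} together with the persistent excitation of $\Phi_a$. The starting point is the fact, already established in the derivation preceding Theorem~\ref{thm: stability}, that $\dot V \le 0$ along trajectories. Hence $V$ is non-increasing and bounded below by zero, so $z$ stays bounded and the total dissipation $\int_{t_0}^{\infty}(-\dot V)\,d\tau = V(t_0) - \lim_{\tau\to\infty} V(\tau)$ is finite. In particular, for $t$ large the dissipation $V(t) - V(t+T)$ accumulated over any window of fixed length $T$ becomes arbitrarily small; this finite energy budget is the quantity I will play against the persistent excitation.

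First I would suppose, for contradiction, that $\|\widetilde{\Theta}_a(\tau)\| > \delta$ for every $\tau$ in the window $[t, t+T]$. I would then argue that the parameter error is essentially frozen over the window: since the tuner \eqref{equ:HT_1}-\eqref{equ:HT_2} drives $\widehat{\Theta}_a$ only through $e_u$, a small dissipation forces $\widehat{\Theta}_a - \Xi_a$ and $\Xi_a - \Theta_a$, and therefore $\widetilde{\Theta}_a(\tau)$, to remain close to their values at $t$. Next I would use the error dynamics \eqref{equ:e_u_dyn}, in which $e_u$ is forced by the parameter-error block $B_p[\Lambda\widetilde{\Theta}, diag(\widetilde{\lambda})]$ acting on the augmented regressor, together with $A_m$ Hurwitz and $\Delta u$ already shown to be small, to relate the energy of $e_u$ on the window to this forcing term. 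Invoking the persistent excitation inequality \eqref{equ:PE} on the sub-interval $[t_2, t_2+\delta_0]$ then yields a strictly positive lower bound, of order $\delta^2$ and scaling with the excitation constants $\epsilon$ and $\delta_0$, on the portion of $\dot V$ that is active over the window.

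Putting the two estimates together, the assumption $\|\widetilde{\Theta}_a\| > \delta$ throughout $[t, t+T]$ forces a decrease $V(t) - V(t+T) \ge c(\epsilon, \delta) > 0$ over every excitation period contained in the window. Choosing $T$ to contain enough excitation periods that the implied cumulative decrease exceeds the available budget $V(t_0)$ produces a contradiction with $V \ge 0$. Therefore there must exist some $t_2 \in [t, t+T]$ with $\|\widetilde{\Theta}_a(t_2)\| \le \delta$, and carrying the constants through gives the explicit dependence $T = T(\epsilon, \delta)$; the initial-condition bound $\|z(t_1)\| \le \epsilon_1$ enters only through the size of the budget and the region where the boundedness guaranteed by Theorem~\ref{thm: stability} applies.

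The hard part will be the middle step. Unlike the classical gradient law, for which $\dot V = -e_u^T Q e_u$ and the dissipation is exactly proportional to $\int \|e_u\|^2$, the high-order tuner produces a $\dot V$ that is a sum of completed squares in $e_u$ and $\widehat{\Theta}_a - \Xi_a$, so the link between dissipation and $\|e_u\|^2$ — and hence the observability argument that converts persistent excitation of $\Phi_a$ into a lower bound on dissipation — is no longer immediate. Controlling the auxiliary variable $\Xi_a$ and showing that the two-state tuner dynamics preserve this excitation-to-error link is where I expect the real work to lie, and I would lean on the high-order tuner convergence analysis of \cite{gaudio2020class} to close this gap.
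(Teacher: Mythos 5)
First, a point of comparison: the paper does not actually prove Lemma~\ref{lemma: theta_bound}; it states it alongside Theorem~\ref{thm: PE} and defers the proof entirely to the cited reference \cite{annaswamy2021online}. Your blueprint --- contradiction, finite Lyapunov budget, and persistent excitation forcing a guaranteed per-window decrease of $V$ whenever $\|\widetilde{\Theta}_a\|>\delta$ --- is the standard uniform-asymptotic-stability argument that such references use, so the overall route is the right one.

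However, the step you yourself flag as ``the hard part'' is a genuine gap, not a technicality, and your sketch does not close it. The dissipation inequality actually derived in the paper has the form $\dot V \le -\left(\|e_u\|-a\right)^2-\left(\|e_u\|-b\right)^2$ with $a$ proportional to $\|\widehat{\Theta}-\Xi\|_F\,\|\Phi\|$; this expression vanishes on a set where $e_u\neq 0$, so ``$V(t)-V(t+T)$ is small'' does not imply that $\int_t^{t+T}\|e_u\|^2\,d\tau$ is small. Both halves of your contradiction rely on exactly that implication: (i) the claim that small dissipation freezes $\widetilde{\Theta}_a$ over the window goes through $\dot\Xi_a=-\gamma B_p^TPe_u\Phi_a^T$ and therefore needs an $\mathcal{L}_2[t,t+T]$ bound on $e_u$; and (ii) the claim that PE plus $\|\widetilde{\Theta}_a\|>\delta$ forces a large dissipation needs the converse, namely that $e_u$ becoming large makes $-\dot V$ large. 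What the Lyapunov analysis does give (if one keeps the terms dropped when completing the squares) is $\mathcal{N}_t^{1/2}(\widehat{\Theta}_a-\Xi_a)\in\mathcal{L}_2$, and the published high-order-tuner convergence proofs work with that quantity and with the pair $(\Xi_a-\Theta_a,\ \widehat{\Theta}_a-\Xi_a)$ jointly, rather than with $e_u$ alone; this two-state bookkeeping is precisely the content you are outsourcing to \cite{gaudio2020class}, and it is where the proof actually lives. Two smaller inaccuracies: $\Delta u$ is not ``already shown to be small'' (Theorem~\ref{thm: stability} only bounds $\|e\|$ by $O(\sup\|\Delta u\|)$, and $\Delta u$ is persistently nonzero whenever saturation is active, which is why $-\Delta u$ is folded into the regressor), and the excitation hypothesis you need is PE of the augmented regressor $\Phi_a$, a property of the closed loop rather than a free design choice on $\Phi$.
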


\begin{theorem}\label{thm: PE}
    If $\Phi_a (t)$ satisfies the persistent excitation property in \eqref{equ:PE}, then the origin in \eqref{equ:HT_adap}, $e_u = 0, \Xi_a = 0, \Theta_a = 0$, is uniformly asymptotically stable.
\end{theorem}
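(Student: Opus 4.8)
The plan is to upgrade the uniform stability already obtained in the Lyapunov analysis to uniform asymptotic stability by exploiting the persistent excitation of $\Phi_a$ through Lemma~\ref{lemma: theta_bound}. The computation of $\dot{V}$ preceding Theorem~\ref{thm: stability} shows that the candidate in \eqref{equ:Lyapunov} satisfies $\dot{V} \le 0$ along the trajectories of \eqref{equ:HT_adap}. Hence $V(t)$ is non-increasing and bounded below by zero, the error state $z = [e_u^T, \widetilde{\Theta}_a^T]^T$ is uniformly bounded, and the origin is uniformly stable; since $P \succ 0$ and $\Lambda_a \succ 0$, $V$ is in fact equivalent to $\|e_u\|^2 + \|\Xi_a - \Theta_a\|^2 + \|\widehat{\Theta}_a - \Xi_a\|^2$, so it suffices to show that $V(t) \to 0$ uniformly. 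Following the classical route for adaptive error systems \cite{narendra2012stable}, I would establish a uniform decrement of the form $V(t+T) \le \gamma V(t)$ for some window length $T$ and some $\gamma \in (0,1)$ independent of $t$, which immediately yields geometric decay of $V$ and therefore uniform asymptotic (indeed exponential) stability of the origin.

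First I would fix the bound $\epsilon_1$ on $\|z\|$ furnished by uniform stability and feed it, together with a target accuracy $\delta$, into Lemma~\ref{lemma: theta_bound}. Since $\Phi_a$ is PE in the sense of \eqref{equ:PE}, the hypotheses of the lemma are met on every window, so for each $t$ there is a time $t_2 \in [t, t+T]$ at which the parameter error satisfies $\|\widetilde{\Theta}_a(t_2)\| \le \delta$. The PE property is exactly what forces this dip: it guarantees that the regressor excites every direction of the parameter space within the window, so the parameter error cannot remain uniformly large without producing a detectable effect in $e_u$ and a corresponding drop in $V$.

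Next I would convert the instantaneous smallness of $\widetilde{\Theta}_a$ at $t_2$ into smallness of the full Lyapunov function. The error dynamics \eqref{equ:e_u_dyn} is a Hurwitz filter ($A_m$ Hurwitz) driven by a term proportional to $\widetilde{\Theta}_a \Phi_a$; combining the bound at $t_2$ with the uniform bounds on $\Phi_a$ and $e_u$ and with the non-increasing property of $V$, I would bound $V(t_2)$, and hence $V(t+T)$, by a quantity that can be made an arbitrarily small fraction of $V(t)$ by choosing $\delta$ small enough relative to $V(t)$. This produces the contraction factor $\gamma < 1$ uniformly in $t$, completing the argument.

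The hard part will be this last step: translating the one-instant bound $\|\widetilde{\Theta}_a(t_2)\| \le \delta$ delivered by Lemma~\ref{lemma: theta_bound} into a genuine decrement of $V$ that simultaneously controls $e_u$ and the high-order-tuner states $\Xi_a$ and $\widehat{\Theta}_a$. The normalization $\mathcal{N}_t$ and the extra filter state $\Xi_a$ break the one-to-one correspondence between parameter error and $V$ that one has for the classical gradient law, so the chief technical burden is to verify that the window length $T$ and the contraction factor $\gamma$ produced by this construction are genuinely uniform in $t$, i.e., that the constants inherited from the PE bound in \eqref{equ:PE} and from Lemma~\ref{lemma: theta_bound} do not degrade as $t \to \infty$. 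This uniformity is precisely what separates asymptotic from merely uniform stability, and I would follow the template in \cite{gaudio2020class} to secure it.
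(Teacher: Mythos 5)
First, note that the paper does not actually prove Theorem~\ref{thm: PE} in-line: it defers the proof to \cite{annaswamy2021online}, and the surrounding text explicitly cautions that ``the underlying asymptotic convergence is not necessarily exponential.'' This is exactly where your proposal goes wrong. Your plan is to establish a uniform geometric decrement $V(t+T)\le\gamma V(t)$ with $\gamma\in(0,1)$ and $T$ both independent of $t$; for this class of error systems that is equivalent to uniform \emph{exponential} stability, which is strictly stronger than the claimed UAS and is precisely what the normalization $\mathcal{N}_t = 1+\mu\Phi_a^T\Phi_a$ in the high-order tuner is known to destroy. The failure is visible inside your own construction: you obtain the contraction factor ``by choosing $\delta$ small enough relative to $V(t)$,'' but the window length furnished by Lemma~\ref{lemma: theta_bound} is $T=T(\epsilon,\delta)$ and grows as $\delta$ shrinks. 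Since $V(t)$ decreases along trajectories, the required $\delta$ shrinks with it, $T$ blows up, and neither $T$ nor $\gamma$ is uniform over the trajectory. The correct use of Lemma~\ref{lemma: theta_bound} is the classical UAS template: for each target accuracy $\delta$ fix the corresponding $T(\epsilon,\delta)$, show the trajectory enters the $\delta$-ball within that window, and invoke uniform stability to keep it there; this yields uniform asymptotic convergence without any geometric rate, consistent with the paper's remark that no explicit bound on $T_{adap}$ versus $\delta$ is available.

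A second, more local gap: Lemma~\ref{lemma: theta_bound} controls only $\|\widetilde{\Theta}_a(t_2)\| = \|(\widehat{\Theta}_a-\Xi_a)+(\Xi_a-\Theta_a)\|$ at a single instant, whereas the Lyapunov function \eqref{equ:Lyapunov} penalizes $\|\Xi_a-\Theta_a\|^2$ and $\|\widehat{\Theta}_a-\Xi_a\|^2$ \emph{separately}. Smallness of the sum does not bound either summand, so $\|\widetilde{\Theta}_a(t_2)\|\le\delta$ does not by itself make $V(t_2)$ small; you need an additional argument (e.g., from the $-\tfrac{\beta}{\gamma}\mathcal{N}_t\,Tr[(\widehat{\Theta}_a-\Xi_a)^T(\cdot)(\widehat{\Theta}_a-\Xi_a)]$ term in $\dot V$) that drives $\widehat{\Theta}_a-\Xi_a$ to zero, and likewise a handle on $e_u(t_2)$. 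You flag this as ``the hard part,'' which is fair, but as written the step from the one-instant bound on $\widetilde{\Theta}_a$ to a decrement of the full $V$ is asserted rather than proved, and it is the step on which the whole argument rests.
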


Under the assumption that $\Phi_a (t)$ satisfies persistent excitation property, Lemma~\ref{lemma: theta_bound} and Theorem~\ref{thm: PE} state that arbitrary accuracy of estimated parameters can be achieved in finite time. Given a positive estimation accuracy $\delta$, we can always find $t_2$ such that $\|\widetilde{\Theta}_a\| \leq \delta$. This implies that given a $\delta$, there exists a large enough $T_{adap}$ such that, $\|\widetilde{\Theta}_a(T_{adap})\| \leq \delta$ is satisfied. We refer the readers to \cite{annaswamy2021online}, where detailed proofs can be found. As the underlying asymptotic convergence is not necessarily exponential, an explicit bound on $T_{adap}$ that leads to a desired accuracy $\delta$ is yet to be defined. Non-asymptotic tools \cite{sarker2023accurate} may need to be examined for this purpose.


\subsection{MPC: Optimize after parameter learning}
Now that the unknown parameter values have been estimated with accuracy $\delta > 0$, we can proceed to replace the unknown parameters in the optimal control problem considered in Section III with the parameter estimates. More precisely, at $t = t_0 + T_{adap}$, we compute the plant parameter estimates as:
\begin{equation}\label{equ:para_bound}
    \hat{A}_p = A_m - B_p \hat{\Lambda} \hat{K}_x(t_0 + T_{adap}), \hat{\Lambda} = diag(\hat{\lambda}(t_0 + T_{adap}))
\end{equation}

Since there is still some discrepancy between unknown parameters and their estimates, we choose a model predictive control (MPC) design to compute an approximate optimal control, $u_{MPC}$. This carried out at every sampling time instant $t_i$ as follows \cite{kouvaritakis2016model}.
\begin{subequations}\label{equ:LQT_MPC}
    \begin{align}
        \min_{u(\tau)} J &= h(x_p(t_i+T)) + \int_{t_i}^{t_i+T} l(x_p, u, \tau) d\tau\\
        \text{subject to } & x(t_0) = x_0 \\
        &\dot{x}_p(\tau) = \hat{A}_p x_p(\tau) + B_p \widehat{\Lambda} (u(\tau) ),  \forall \tau \in [t_i, t_i+T]\\
        & u(\tau)  \in U ,\quad \forall \tau \in [t_i, t_i+T]\\
        & U = \{u\in \mathbb{R}^{n_u}: \|u\| \leq u_{max}\}
    \end{align}
\end{subequations}
We denote $u^{AOC}(\tau), \tau \in [t_i, t_i+T]$ as the solution to \eqref{equ:LQT_MPC} and denote $u^{MPC}(\tau) = u^{AOC}(\tau), \forall \tau \in [t_i, t_{i+1}]$. We denote the corresponding cost-to-go as $V^{MPC}$. It is also assumed that $t_1 \geq t_0 + T_{adap}$.

Under Assumption~\ref{assum:R_diagonal}, by Corollary~\ref{coro: ball_cons_LQT}, $u^{MPC}$ is a linear feedback controller, denoted as $u^{MPC} = \hat{K}_1 x_p + \hat{K}_0$. It should be noted that the parameters of $u^{MPC}$, $\hat{K}_1$ and $\hat{K}_0$ are directly dependent on the estimated plant parameters $\hat{A}_p$ and $\Lambda$. Similarly, denoting the optimal solution of the problem in \eqref{equ:LQT_uncertain_1}-\eqref{equ:LQT_uncertain_2}
as $u^*$, it is easy to infer from Corollary~\ref{coro: ball_cons_LQT}, again, that $u^* = K_1^* x_p + K_0^*$.

Since $\|\widehat{\Theta}_a\|\leq \delta$, $\hat{\Theta}_a = [\hat{K}_x, \hat{K}_r, \widehat{\Lambda}]$, by \eqref{equ:para_bound},
\begin{equation*}
    \|\hat{A}_p - A_p\| = O(\delta), \quad \|\widehat{\Lambda} - \Lambda\| \leq \delta
\end{equation*}
Note that 
\begin{equation*}
    \|\hat{K}_1 - K_1^*\| = O(\delta), \|\hat{K}_0 - K_0^*\| = O(\delta)
\end{equation*}

Using Proposition~\ref{prop: dist_LQ_HJB}, it follows that the deviation in the corresponding cost-to-go functions is given by
\begin{equation}\label{equ:cost_gap}
    |V^{MPC} - V^*| \leq O(\delta^2)
\end{equation}

In conclusion, the discrepancy between the cost associated with system trajectories generated using $u^{MPC}$ and the optimal cost is quantified in \eqref{equ:cost_gap}, and it summarizes the main advantage of the proposed MSAC-MPC controller. That is, the MSAC-MPC controller results in a cost that differs from the optimal cost by an order of magnitude comparable to parameter error $\delta$ and is of the order $\delta^2$. This optimal cost is evaluated after a time $T_{adap}$ elapses after $t_0$. The benefit of the proposed method is the reduction of the computational burden over $[t_0, t_0+T_{adap}]$ in comparison to \cite{adetola2009adaptive} where a min-max optimization will have to be solved to compute a controller robust for all values of $A_p$ and $\Lambda$ in a compact set. Instead, our controller is chosen to be stable and sub-optimal over $[t_0, t_0+T_{adap}]$ and near-optimal for $t\geq t_1 > t_0+T_{adapt}$. Ensuring optimality over [$t_0,t_0+T_{adap}]$ is a difficult problem that remains to be addressed.

\section{Numerical Example}
A numerical example is implemented to demonstrate the ability of proposed MSAC-MPC algorithm to ensure closed-loop system stability, learn unknown parameter values before $T_{adap}$, and perform optimal control after $T_{adap}$. The plant is chosen as an unstable, second-order linear system with two inputs in the form of \eqref{equ:plant}, where
\begin{equation*}
    A_p = \begin{bmatrix}
        1 & 1\\ 0 & 1
    \end{bmatrix},
    B_p = \begin{bmatrix}
        1 & 0 \\ 0 & 1
    \end{bmatrix}, \Lambda = \begin{bmatrix}
        1 & 0 \\ 0 & 1
    \end{bmatrix}
\end{equation*}

The reference system is chosen as specified in \eqref{equ:ref_sys} with 
\begin{equation*}
    A_m = \begin{bmatrix}
        -1 & 1\\ 0 & -2
    \end{bmatrix}, B_m = \begin{bmatrix}
        1 & 0 \\ 0 & 1
    \end{bmatrix}
\end{equation*}

The input magnitude saturation function $B_{sat}(u)$ is defined in \eqref{equ:input_sat}, and the admissible input set $U$ is defined in \eqref{equ:LQT_uncertain_2}, where the $u_{max}$ is chosen as $u_{max} = 8$. Without loss of generality, it assumed that $t_0 = 0$.

The exogenous signal desired to be tracked is defined as,
\begin{equation*}
    x_d(t) = \begin{bmatrix}
        sin(t) + sin(3t) + sin(5t) + sin(7t)\\
        sin(2t) + sin(4t) + sin(6t)
    \end{bmatrix}
\end{equation*}
Note that there are 12 uncertain parameters in $\Theta_a$ to be estimated. Choosing the above $x_d$ with sinusoidal signals at 7 different frequencies ensures $\Phi_a$ to have persistent excitation properties in \eqref{equ:PE} \cite{narendra2012stable}.

The penalty matrix $Q$, $R$ defined in \eqref{equ:LQT_uncertain_1} are chosen as follows,
\begin{equation*}
    Q = \begin{bmatrix}
        20 & 0 \\ 0 & 20
    \end{bmatrix}, R = \begin{bmatrix}
        1 & 0 \\ 0 & 1
    \end{bmatrix}
\end{equation*}

The initial condition of the state $x_p$ is set to $x_p(0) = [0, 0]^T$. The initial parameter estimator values are chosen as $\widehat{\Theta}_a(0) = 0.8\Theta_a$.

In the simulation, $T_{adap} = 32\pi$ is chosen to ensure parameter learning is performed sufficiently. MSAC controller defined in \eqref{equ:AC}, \eqref{equ:feedback_r}, \eqref{equ:e_u_dyn}, \eqref{equ:HT_1}-\eqref{equ:HT_2} is applied to \eqref{equ:plant} during $t \in [0, T_{adap}]$. The MPC controller $u^{MPC}$ is run during $[T_{adap}, T_{adap} + T_{MPC}]$, where $T_{MPC} = 8$ [sec]. Simulation results for MSAC controller is shown in Figure~\ref{fig:MSAC_x}-\ref{fig:MSAC_theta}.

\begin{figure}[H]
    \begin{center}
        \begin{picture}(220, 80)
            \put(0, -10){\epsfig{file=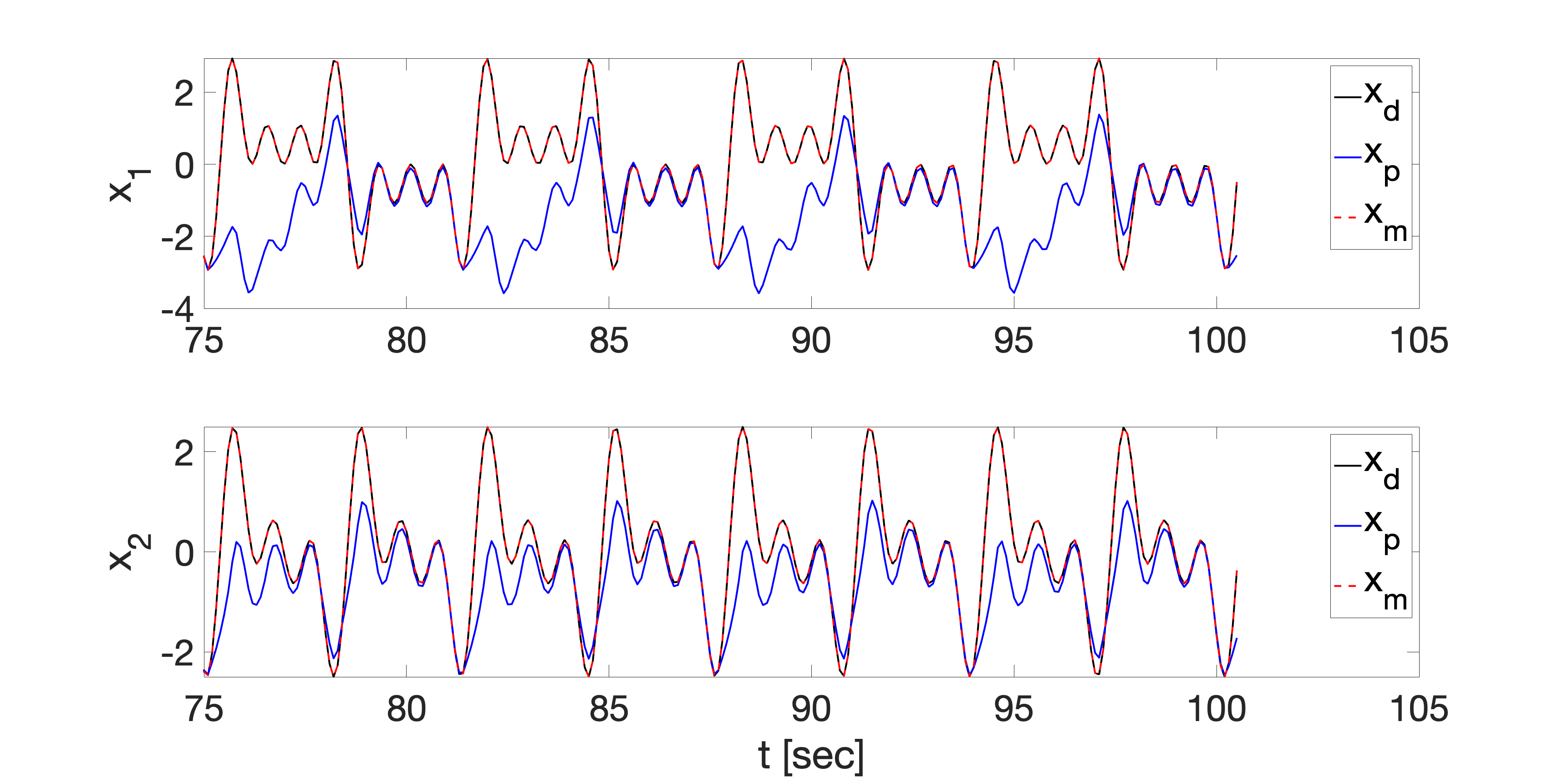, height=1.4in}}
        \end{picture}
    \end{center}
    \caption{State trajectories of plant $x_p$, reference system $x_m$, and exogenous signal $x_d$.}
    \label{fig:MSAC_x}
\end{figure}
\begin{figure}[H]
    \begin{center}
        \begin{picture}(220, 75)
            \put(0, -10){\epsfig{file=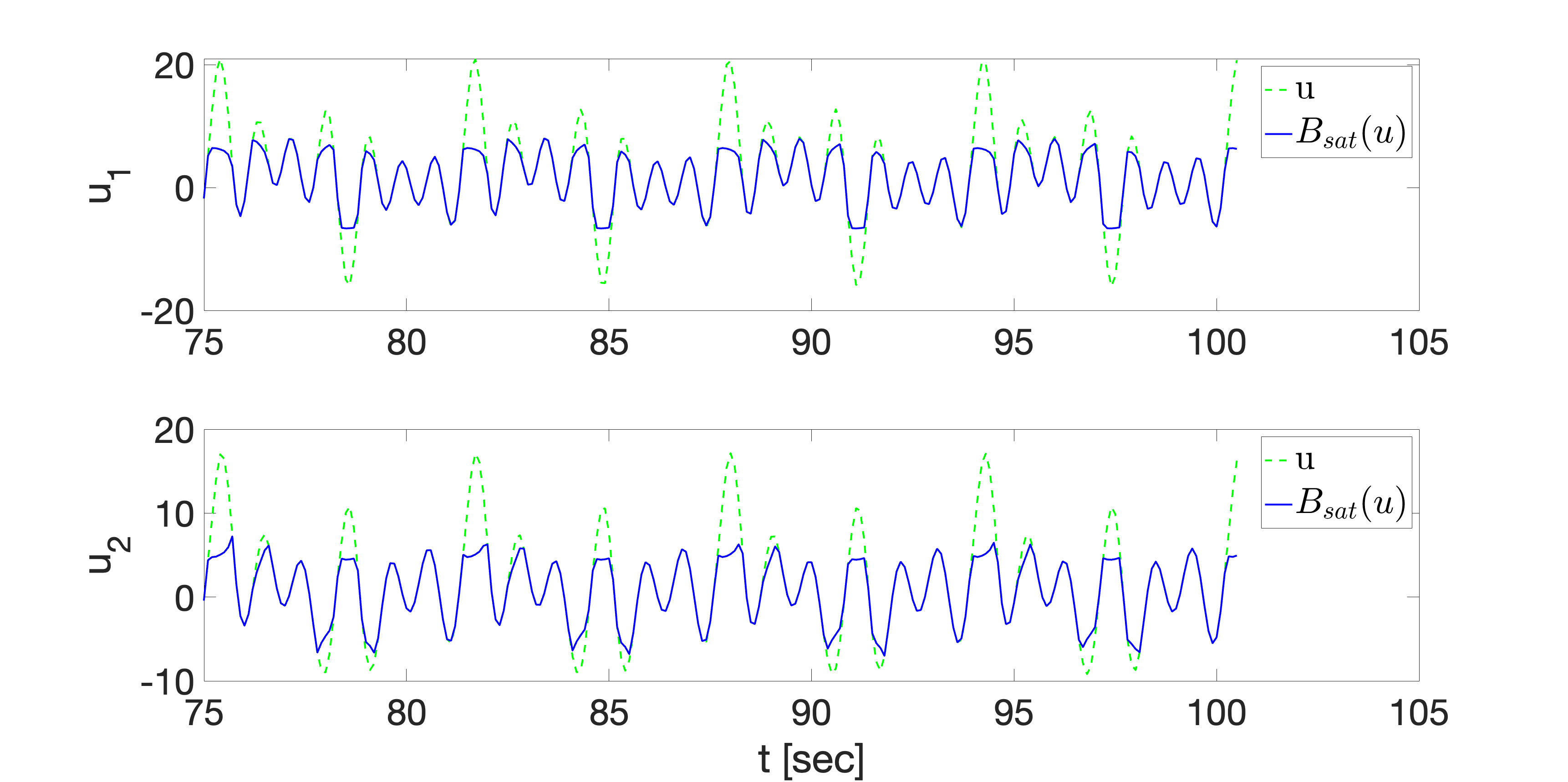, height=1.4in}}
        \end{picture}
    \end{center}
    \caption{Time histories of unsaturated $u$ and saturated $B_{sat}(u)$.}
    \label{fig:MSAC_u}
\end{figure}
\begin{figure}[!h]
    \begin{center}
        \begin{picture}(240, 25)
            \put(0, -10){\epsfig{file=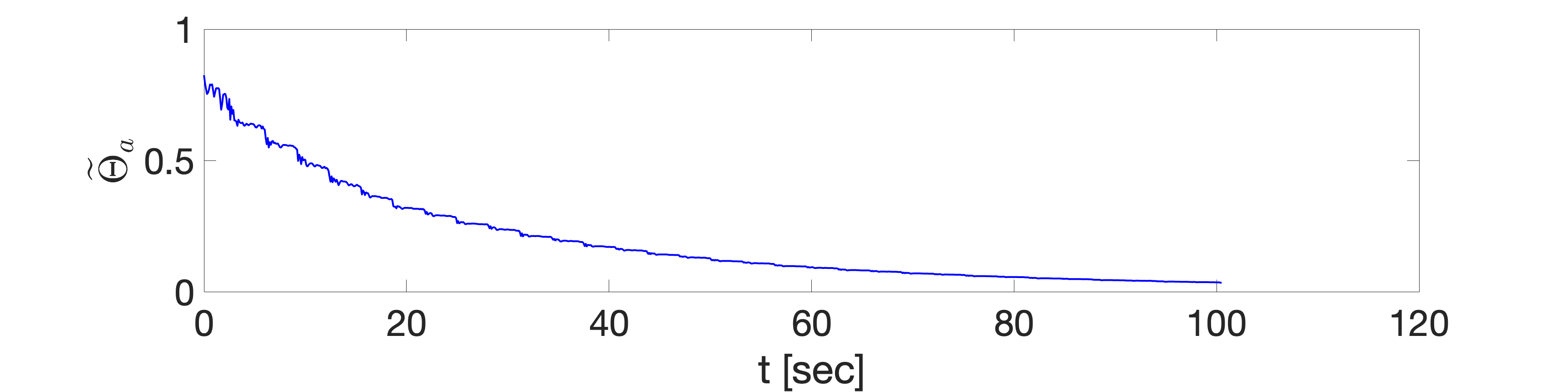, height=0.75in}}
        \end{picture}
    \end{center}
    \caption{2-norm of parameter estimation error $\widetilde{\Theta}_a$.}
    \label{fig:MSAC_theta}
\end{figure}

In Figure~\ref{fig:MSAC_x}, it is observed that $x_m$ follow $x_d$ closely. Due to active control input saturation \eqref{equ:input_sat} observed in Figure~\ref{fig:MSAC_u}, the tracking error $\|x_p - x_d\|$ is not converging to zero, but it is the same order of magnitude as $\Delta u$ defined in defined in Section IV.A, which demonstrates the MSAC controller's ability to provide stability guarantees. It is also observed in Figure~\ref{fig:MSAC_theta} that under PE condition, the MSAC controller is able to reduce the parameter estimation error rapidly, as stated in Theorem~\ref{thm: PE}.

After $T_{adap}$, the MSAC-MPC controller switches to $u^{MPC}$ defined in Section IV.B. The simulation results are shown in Figure~\ref{fig:MPC_x}-\ref{fig:MPC_u}, where $x_p$ is the plant state trajectory generated by applying $u^{MPC}$ to \eqref{equ:plant}, and $x_p^*$ is the latent optimal state trajectory generated by applying $u^*$ to \eqref{equ:plant}. It is observed that $x_p$ and $u^{MPC}$ match with $x_p^*$ and $u^*$ closely, respectively.
\begin{figure}[!h]
    \begin{center}
        \begin{picture}(200, 100)
            \put(-10, 0){\epsfig{file=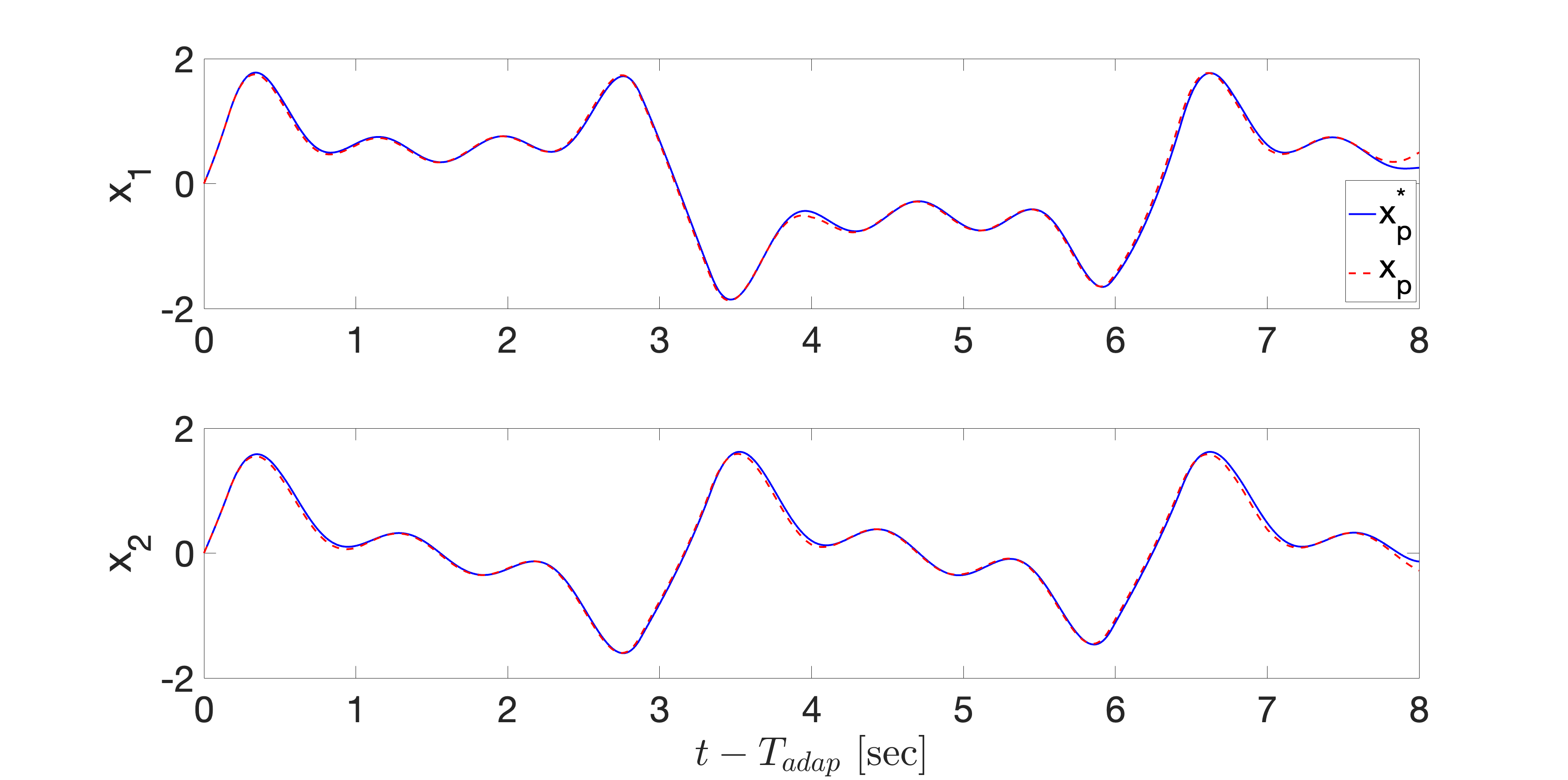, height=1.4in}}
        \end{picture}
    \end{center}
    \caption{Comparison between time histories of plant state $x_p$ and optimal state $x_p^*$.}
    \label{fig:MPC_x}
\end{figure}
\begin{figure}[!h]
    \begin{center}
        \begin{picture}(200, 100)
            \put(-10, 0){\epsfig{file=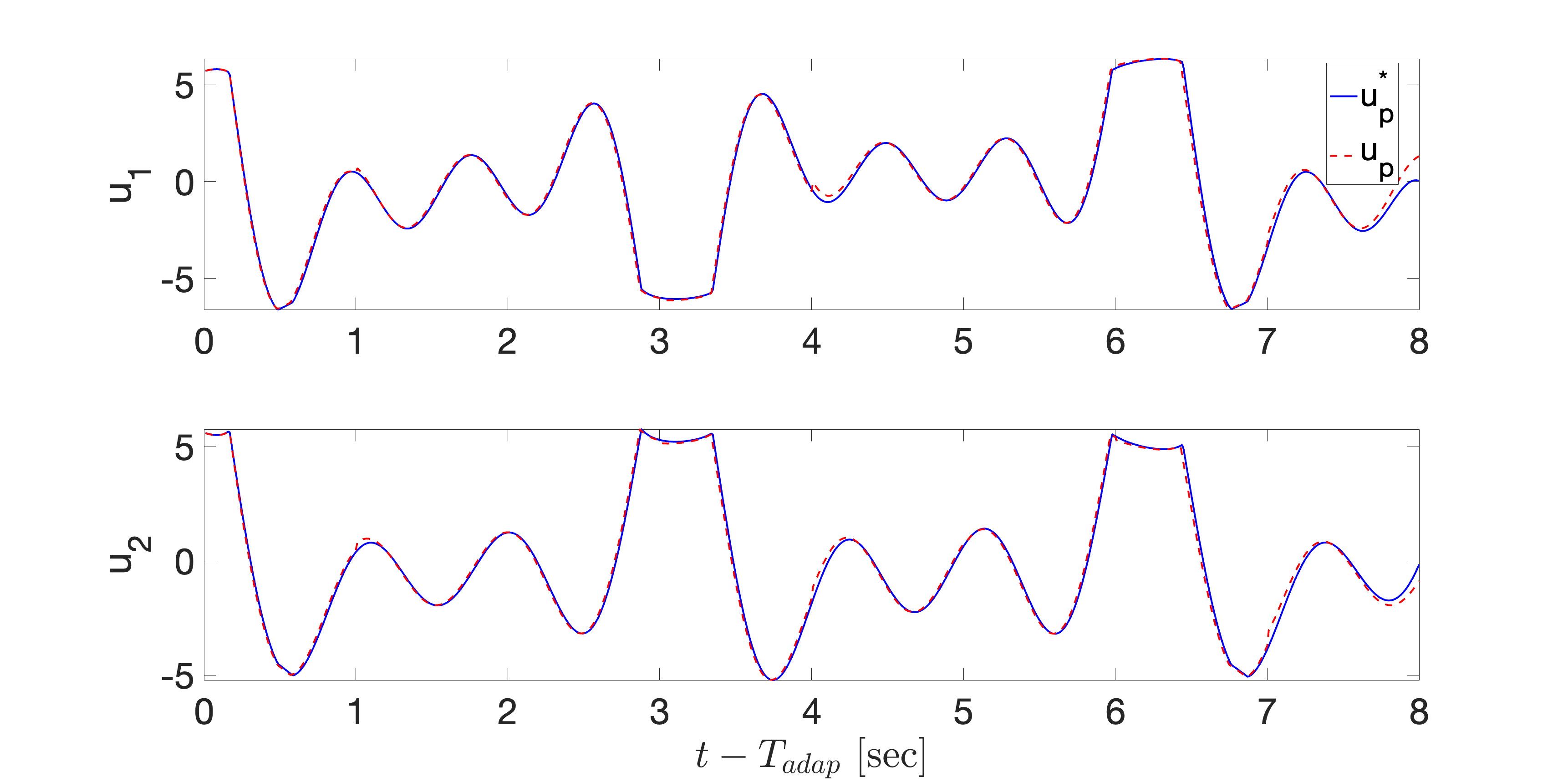, height=1.4in}}
        \end{picture}
    \end{center}
    \caption{Comparison between time histories of $u^{MPC}$ and $u^*$.}
    \label{fig:MPC_u}
\end{figure}

\section{Conclusions}
In this paper, we have developed a MSAC-MPC controller to address linear quadratic tracking optimal control problems under parametric uncertainties and input saturation. Theoretical results have been developed to show the proposed MSAC controller is able to provide stable solutions and achieve parameter learning under persistent excitation, and that after switching from MSAC to MPC controller, the optimality gap is proportional to the parameter estimation error bound. A numerical example based on an unstable second-order, two-input system has been developed to demonstrate the effectiveness of the MSAC-MPC controller along with its parameter learning ability and well-defined optimality gap. We note that during the first step when adaptation is being carried out, the controller is stable but not optimal, and that during the second step, it is near-optimal. How these optimality gaps can be further reduced is a topic for future research.

\addtolength{\textheight}{-12cm}   






\bibliographystyle{IEEEtran}
\bibliography{ref.bib}

\end{document}